\definecolor{mycolor}{HTML}{F7F8E0}
\definecolor{myorange}{RGB}{245,156,74}
\definecolor{cadetgrey}{rgb}{0.57, 0.64, 0.69}
\definecolor{calpolypomonagreen}{rgb}{0.12, 0.3, 0.17}
\newcommand\cyr{%
\renewcommand\rmdefault{wncyr}%
\renewcommand\sfdefault{wncyss}%
\renewcommand\encodingdefault{OT2}%
\normalfont
\selectfont}
\DeclareTextFontCommand{\textcyr}{\cyr}
\numberwithin{equation}{section}
\newtheorem{thm}{Theorem}[section]
\newtheorem{cor}[thm]{Corollary}
\newtheorem{lem}[thm]{Lemma}
\newtheorem{prop}[thm]{Proposition}
\newtheorem{assu}[thm]{Assumption}
\theoremstyle{definition}
\newtheorem{defn}[thm]{Definition}
\newtheorem{choice}[thm]{Choice}
\newtheorem{rem}[thm]{Remark}
\newtheorem{ques}[thm]{Question}
\newcommand{\stark}{\boldsymbol{\epsilon}}
\newcommand{\ks}{\boldsymbol{\kappa}}
\begin{document}
\title{On the adjoint Selmer groups of semi-stable elliptic curves and Flach's zeta elements}
\author{Chan-Ho Kim}
\address{Department of Mathematics and Institute of Pure and Applied Mathematics,
Jeonbuk National University,
567 Baekje-daero, Deokjin-gu, Jeonju, Jeollabuk-do 54896, Republic of Korea}
\email{chanho.math@gmail.com}
\thanks{Chan-Ho Kim was partially supported 
by a KIAS Individual Grant (SP054103) via the Center for Mathematical Challenges at Korea Institute for Advanced Study,
by the National Research Foundation of Korea(NRF) grant funded by the Korea government(MSIT) (No. 2018R1C1B6007009,RS-2025-16067678), 
by research funds for newly appointed professors of Jeonbuk National University in 2024, and
by Global-Learning \& Academic research institution for Master’s$\cdot$Ph.D. Students, and Postdocs (LAMP) Program of the National Research Foundation of Korea (NRF) funded by the Ministry of Education (No.  2019R1A6A1A11051177, RS-2024-00443714).
}
\date{\today}
\subjclass[2020]{11R23, 11R39 (Primary); 	11F33, 11F67,11G40 (Secondary)}
\keywords{Stark systems, symmetric squares of elliptic curves, Flach's zeta elements, modularity lifting theorem, adjoint Selmer groups, refined Iwasawa theory}
\maketitle
\begin{abstract} 
We explicitly construct the rank one primitive Stark (equivalently,  Kolyvagin) system extending a constant multiple of Flach's zeta elements for semi-stable elliptic curves.
As its arithmetic applications, we obtain the equivalence between the $p$-indivisibility of the constant multiple and the minimal modularity lifting theorem, and we also discuss the cyclicity of the adjoint Selmer groups.
In particular, we give an affirmative answer to a question of Mazur and Rubin.
Our Stark system construction yields a more refined interpretation of the collection of Flach's zeta elements than the ``geometric Euler system" approach due to Flach, Wiles, Mazur, and Weston.
\end{abstract}

\setcounter{tocdepth}{1}

\section*{Introduction}
\subsection*{Flach's zeta elements}
As explained in the Introduction of his celebrated work \cite{wiles}, A. Wiles first tried to prove the semi-stable Shimura--Taniyama conjecture and Fermat's last theorem via the construction of an Euler system for symmetric squares of semi-stable elliptic curves extending the cohomology classes constructed by M. Flach  \cite{flach-thesis}, which we call \emph{Flach's zeta elements} in this article. However, the construction of the Euler system was incomplete, and it was regarded as a gap.

After that, Wiles, together with R. Taylor, found a completely different approach towards the semi-stable Shimura--Taniyama conjecture, which is now known as the Taylor--Wiles system argument \cite{taylor-wiles}, to fill the gap. 
See \cite{kurihara-wiles-fermat,kolyvagin-fermat,rubin-silverberg-cambridge, darmon-fermat} for the survey articles written before the gap was filled, and see \cite{calegari-survey} for the far-reaching development of the Taylor--Wiles system argument.

This Euler system approach is often known as the ``geometric Euler system" strategy, and it was further developed mainly by B. Mazur and T. Weston \cite{mazur-hecke-curves, mazur-weston, weston-flach, weston-algebraic-cycles}. However, it was still incomplete to obtain the exact bound of Selmer groups. 


The main goals of this article are to investigate the behavior of the collection of Flach's zeta elements beyond the scope of this  geometric Euler system strategy and to deduce certain structural results for adjoint Selmer groups.
However, it is well-known that the construction question of an \emph{Euler} system extending Flach's zeta element is extremely difficult (see \cite[$\S$3.6]{rubin-book}, \cite[pp. 359--360]{weston-flach}, \cite[Rem. 6.3.4]{mazur-rubin-book}, and \cite[Intro.]{loeffler-zerbes-symmetric-square}  for example).

What is the new idea to overcome this notorious difficulty?
We are greatly benefited from various remarkable and significant developments of the theory of Euler systems.
In particular, it turns out that Kolyvagin systems are more essential than Euler systems to obtain the exact bound of the corresponding dual Selmer groups \cite{mazur-rubin-book}. Also, the notion of Stark systems is developed by generalizing the units predicted by Stark-type conjectures \cite{mazur-rubin-control}.
The key observation is that the collection of Flach's zeta elements forms a ``Stark system of Gauss sum type"\footnote{No Gauss sums actually appear in this article.  Since we initially have only \emph{finitely many} cohomology classes forming a Stark system,  we call them ``Gauss sum type".}.
Since the module of Stark systems (equivalently, Kolyvagin systems) is free of rank one, the Gauss sum type Stark system naturally extends to the \emph{bona fide} one.
In other words,  \emph{we construct derivative (so ramified) classes defined over $\mathbb{Q}$ directly from Flach's zeta elements by using the framework of Stark systems}.

Since there is a certain constant multiple subtlety in our Stark system approach,  our result does not give a new proof of the semi-stable Shimura--Taniyama conjecture or Fermat's last theorem yet. 
Indeed, we prove that the minimal modularity lifting theorem is equivalent to showing that the constant multiple is a $p$-adic unit.
Also, we do not know whether our Stark system actually comes from an Euler system\footnote{We are not sure whether it is possible to recover an Euler system from the Kolyvagin system associated to the Euler system.}.

\subsection*{The cyclicity of adjoint Selmer groups}
The cyclicity question has a long history in Iwasawa theory. This is just a quick summary of the preface of Hida's book \cite{hida-elementary-modular} and we strongly recommend the reader to read it.

K. Iwasawa himself was interested in the structure of various classical Iwasawa modules including the cyclicity over the Iwasawa algebra.
In \cite{iwasawa-on-p-adic-l-functions}, he obtained the cyclicity of certain classical Iwasawa modules over the  Iwasawa algebra (as well as the main conjecture) under the Kummer--Vandiver conjecture.
Even without assuming the Kummer--Vandiver conjecture, he proposed several interesting questions on the cyclicity around 1980 \cite{iwasawa-62, iwasawa-U3}.

H. Hida explored the non-abelian analogue of this nature, and \cite{hida-invent-1986} was written with this purpose in mind.
In particular, he studied the cyclicity question of the adjoint Selmer groups over the universal deformation ring in a series of his papers\cite{hida-anticyclotomic-cyclicity, hida-cyclicity-fundamental-iwasawa-2017,hida-universal-ordinary-real-quadratic, hida-elementary-modular}.
In \cite[Chap. 7]{hida-elementary-modular}, he proved many cyclicity results for the adjoint Selmer groups of Artin representations.
As far as we understand, the elliptic curve case seems quite open \cite[pp. 287--288 and 339]{hida-elementary-modular} unless the length of adjoint Selmer groups is $\leq 1$. As an application of the above Stark system construction, we obtain a cyclicity result for the adjoint Selmer groups of  elliptic curves (Corollary \ref{cor:cyclicity}).

\subsection*{Comparison with other recent work}
In recent years, some groups of mathematicians have developed new approaches towards the construction of symmetric square and adjoint Euler systems from different sources \cite{loeffler-zerbes-symmetric-square, urban-adjoint, loeffler-zerbes-adjoint, sangiovanni-skinner}.
We provide brief remarks on their constructions and explain why they are all independent of ours.

In \cite{loeffler-zerbes-symmetric-square}, 
the Beilinson--Flach Euler system for the Rankin--Selberg product of the same $p$-ordinary modular form is studied
and 
an  is constructed from Beilinson--Flach's elements and bounds Selmer groups of the symmetric square representations of an ordinary modular form twisted by a non-trivial Dirichlet character. This twist is inevitable to have the correct Euler system relation.
In \cite{urban-adjoint}, an Euler system for adjoint representations of ordinary Hilbert modular forms is constructed directly from congruence modules, but the $R=\mathbb{T}$ theorem  is an ingredient of the construction.
Also, the corresponding explicit reciprocity law depends on a conjecture on the Fitting ideals of some equivariant congruence modules for abelian base change.
In \cite{loeffler-zerbes-adjoint}, an Euler system for adjoint representations of ordinary modular forms is constructed from Asai--Flach classes and is used to prove one-sided divisibility of the main conjecture for the symmetric square Selmer group over the cyclotomic $\mathbb{Z}_p$-extension of $\mathbb{Q}$ up to a power of $p$.
In \cite{sangiovanni-skinner}, an Euler system for the symmetric square representation of an ordinary modular form twisted by an odd Dirichlet character is initially constructed by utilizing the pull-back of Eisenstein series on $\mathrm{SO}(3,2)$. Then by using Hida theory (among others), an Euler system for the symmetric square representation (with no extra twist) is obtained. As its consequence, the $R=\mathbb{T}$ theorem follows.
The $p$-ordinary condition is assumed in all the work mentioned above.

Our Stark system also detects the precise upper bound of the adjoint Selmer groups, so the direct connection with the modularity lifting theorem is obtained.
More precisely, our construction yields an equivalence statement as mentioned earlier.
It appears that our approach is the most akin to Wiles' original method since we work with Flach's zeta elements as Wiles did, so it might have some historical meaning.
Also, our argument works for any good reduction prime. 
In addition, the structural refinement like the cyclicity is not observed in other approaches.

\subsection*{The organization}
In \S\ref{sec:main-results}, we state our main result on the existence of the rank one primitive Stark systems for symmetric square representations and its arithmetic applications.
In \S\ref{sec:flach-zeta-elements}, we review Flach's zeta elements and state the explicit reciprocity law for Flach's zeta elements.
In \S\ref{sec:verifying-hypotheses}, we verify the working hypotheses for rank one Stark systems.
In \S\ref{sec:explicit-reciprocity-law}, we recall Weston's computation on the explicit reciprocity law.
In \S\ref{sec:construction},  we prove our main result by constructing the Stark system explicitly.  This is the heart of this article.
In \S\ref{sec:corollaries}, we prove corollaries to the main result.
\section{Statement of the main result} \label{sec:main-results}
Let $E$ be a semi-stable \emph{modular}\footnote{We do not use the semi-stable modularity theorem of Wiles and Taylor--Wiles \cite{wiles,taylor-wiles} and its generalizations in this article.} elliptic curve over $\mathbb{Q}$ of conductor $N$ and $p \geq 5$ a good reduction prime for $E$. Denote by $T_pE$ the $p$-adic Tate module for $E$. Assume that  $\overline{\rho} : G_{\mathbb{Q}} = \mathrm{Gal} ( \overline{\mathbb{Q}}/\mathbb{Q} ) \to \mathrm{Aut}_{\mathbb{F}_p}(E[p])$ is irreducible and $N = N(\overline{\rho})$ where $N(\overline{\rho})$ is the conductor of $\overline{\rho}$.
For $k \geq 1$, let $\mathrm{ad}^0(E[p^k])$ is the (trace zero) adjoint representation associated to $E[p^{k}]$.

We briefly describe how a Stark system (Definition \ref{defn:stark-systems}) looks like and all the details can be found in \S\ref{sec:construction}.
A Stark system $\stark^{\ell}$ for the Selmer data 
$(\mathrm{Sym}^2(E[p^k]), \mathcal{F}^{\ell}_{\mathrm{BK}}, \mathcal{P}^{\mathrm{Flach}, (\ell)}_k)$ (as reviewed in Definition \ref{defn:selmer-data}) is a compatible family of cohomology class 
$$\epsilon^{\ell}_n \in  \bigwedge^{1+\nu(n)} \mathrm{Sel}_{\mathcal{F}^{\ell n}_{\mathrm{BK}}} ( \mathbb{Q}, \mathrm{Sym}^2(E[p^k]) )\otimes \bigwedge^{\nu(n)} \bigoplus_{q \vert n} \mathrm{Hom} \left( \frac{\mathrm{H}^1( \mathbb{Q}_{q}, \mathrm{Sym}^2(E[p^k]) )}{\mathrm{H}^1_{f}( \mathbb{Q}_{q}, \mathrm{Sym}^2(E[p^k]) )}, \mathbb{Z}/p^k\mathbb{Z} \right)$$
where $n$ is a square-free product of the primes in $\mathcal{P}^{\mathrm{Flach}, (\ell)}_k$, and $\nu(n)$ is the number of prime divisors of $n$, $\mathcal{F}^{\ell n}_{\mathrm{BK}}$ is the $\ell n$-relaxed Bloch--Kato Selmer structure, and $\mathrm{H}^1_{f}$ means the finite part of the local cohomology group at $q$ dividing $n$.
A Stark system for $\mathrm{Sym}^2(T_pE)$ is defined by taking the inverse limit.

The main goal of this article is to construct the (rank one) Stark system from Flach's zeta elements. It partially resolves a question of Mazur--Rubin in \cite[Rem. 6.3.4]{mazur-rubin-book}. Indeed, we completely settle their question under the semi-stable modularity lifting theorem (see Corollary \ref{cor:bloch-kato} below).
\begin{thm}[Flach--Stark systems] \label{thm:flach-stark}
For a prime $\ell$ with $(Np, \ell) = 1$, $\ell \equiv 1 \pmod{p}$, and $a_\ell(E) \not\equiv \pm 2 \pmod{p}$, there exists the non-trivial primitive rank one Stark system 
$$\stark^{\mathrm{Flach}', \ell}$$
 for $\mathrm{Sym}^2(T_pE)$ extending a constant multiple of Flach's zeta element $c(\ell)$ (recalled in Theorem \ref{thm:flach-zeta-elements}) which determines the structure of the $\ell$-strict adjoint Bloch--Kato Selmer group
 $$\mathrm{Sel}_{\ell\textrm{-}\mathrm{str}}(\mathbb{Q}, \mathrm{ad}^0(E[p^\infty])).$$
In particular,  the constant multiple  is independent of $\ell$.
\end{thm}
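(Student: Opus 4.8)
The plan is to realise the collection of Flach's zeta elements as the ``Gauss sum type'' part of a Stark system and then exploit the freeness of rank one of the Stark system module to propagate it to a \emph{bona fide} system. First, following the verifications of \S\ref{sec:verifying-hypotheses}, I would fix the Selmer structure $\mathcal{F}$ on $T = \mathrm{Sym}^2(T_pE)$ (recall $\mathrm{Sym}^2(T_pE) \cong \mathrm{ad}^0(T_pE)(1)$, so that the dual local conditions land on $\mathrm{ad}^0(E[p^\infty])$) given by the unramified condition outside $Np\ell$, the Bloch--Kato condition at $p$, and the \emph{relaxed} condition at the auxiliary prime $\ell$, and choose a set $\mathcal{P}$ of Kolyvagin primes disjoint from $Np\ell$. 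Working over $R = \mathbb{Z}/p^k\mathbb{Z}$ and passing to the inverse limit in $k$ at the end, the point is that Mazur--Rubin's running hypotheses hold here: $\overline{\rho}$ is irreducible with sufficiently large image (using $p \geq 5$ and $N = N(\overline{\rho})$), the local cohomology at the primes of $\mathcal{P}$ is free of rank one, and --- the decisive input --- the conditions $\ell \equiv 1 \pmod{p}$ and $a_\ell(E) \not\equiv \pm 2 \pmod{p}$ force $\dim_{\mathbb{F}_p} H^0(\mathbb{Q}_\ell, \mathrm{ad}^0(E[p])) = 1$, so that relaxing at $\ell$ raises the core rank of $(T, \mathcal{F})$ to exactly one. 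Hence $\mathbf{SS}(T, \mathcal{F}, \mathcal{P})$ (equivalently $\mathbf{KS}(T, \mathcal{F}, \mathcal{P})$) is a free $R$-module of rank one, and any generator is a primitive Stark system.

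Next comes the arithmetic input. From the explicit reciprocity law for Flach's zeta elements (Theorem \ref{thm:flach-zeta-elements}), refined through Weston's computation recalled in \S\ref{sec:explicit-reciprocity-law}, one reads off that $c(\ell)$ lies in $H^1_{\mathcal{F}}(\mathbb{Q}, T)$ --- it is unramified away from $Np\ell$ and satisfies the relaxed condition at $\ell$ --- and that its singular residue at $\ell$ is, up to a $p$-adic unit, a controlled leading term governed by an $L$-value. This is exactly the datum of a Stark system along a finite truncation of its index set, i.e.\ the ``Stark system of Gauss sum type''. The heart of the proof (\S\ref{sec:construction}) is then to verify that this truncated datum is compatible with the Stark system axioms and therefore, by freeness of rank one, extends \emph{uniquely} to an element of $\mathbf{SS}(T, \mathcal{F}, \mathcal{P})$; normalising it to be a generator, one obtains the primitive system $\stark^{\mathrm{Flach},\ell}$ whose leading term $\stark^{\mathrm{Flach},\ell}_{1}$ is a fixed scalar multiple of $c(\ell)$ (the scalar measuring precisely the failure of $c(\ell)$ itself to be primitive). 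The standard structure theorem for primitive rank one Stark systems (Mazur--Rubin \cite{mazur-rubin-book, mazur-rubin-control}) then recovers the $R$-module structure of the dual Selmer group $\mathrm{Sel}_{\ell\textrm{-}\mathrm{str}}(\mathbb{Q}, \mathrm{ad}^0(E[p^\infty]))$ from the divisibility and vanishing pattern of the components of $\stark^{\mathrm{Flach},\ell}$.

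For the final assertion --- that the scalar does not depend on $\ell$ --- I would compare $\stark^{\mathrm{Flach},\ell}$ and $\stark^{\mathrm{Flach},\ell'}$ through the relations among the Flach classes $c(\ell)$, $c(\ell')$, and $c(\ell\ell')$ supplied in \S\ref{sec:flach-zeta-elements}: localising the two systems at $\ell$ and at $\ell'$ and matching the corresponding singular residues forces the two normalising constants to coincide, since each is pinned down by the same unit coming from the explicit reciprocity law at a common auxiliary prime.

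The step I expect to be the main obstacle is the construction itself in \S\ref{sec:construction}: one must check honestly that Flach's classes obey the Stark system axioms --- the delicate points being the behaviour of the finite--singular comparison isomorphisms at the auxiliary primes and the exactness and surjectivity statements underlying Mazur--Rubin's propagation --- and, entangled with this, one must pin down the normalising scalar so that it is simultaneously well defined and manifestly $\ell$-independent. One should \emph{not} expect to prove here that this scalar is a $p$-adic unit: its being a unit is exactly what is later shown to be equivalent to the minimal modularity lifting theorem, so while $\stark^{\mathrm{Flach},\ell}$ is unconditionally a non-trivial primitive Stark system, the sharp bound on $\mathrm{Sel}_{\ell\textrm{-}\mathrm{str}}(\mathbb{Q}, \mathrm{ad}^0(E[p^\infty]))$ expressed directly in terms of $c(\ell)$ remains conditional on that input (Corollary \ref{cor:bloch-kato}).
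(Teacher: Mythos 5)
Your overall strategy matches the paper: realise the Flach classes as a ``Gauss sum type'' finite Stark system and propagate it via the freeness-of-rank-one of $\mathbf{SS}$, then normalise to make it primitive. You also correctly flag that primitivity and non-triviality come for free once the constant is chosen, and that the $p$-adic-unit question is postponed to Corollary \ref{cor:bloch-kato}. That part of your plan is sound and in line with \S\ref{sec:construction}.

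There are two concrete issues. First, the construction is more than ``checking that Flach's classes obey the Stark system axioms''. The paper does not verify axioms for $c(\ell)$ alone; it \emph{builds} the components $\epsilon_n$ for $n = q_1\cdots q_s$ as explicit exterior products $(-1)^{s(s+1)/2}\, c(\ell) \wedge \bigwedge_i c(q_i) \otimes \bigwedge_i \psi_{q_i}$ and then proves, using the local conditions satisfied by each $c(q_i)$ (it is Bloch--Kato everywhere except $q_i$, where it is relaxed) and the formula for $\Psi_{n,n/q_s}$ from \cite[Prop.\ A.1--A.2]{mazur-rubin-control}, that $\Psi_{n,1}(\epsilon_n) = \bigl(\prod_i \psi_{q_i}\circ\mathrm{loc}^s_{q_i}(c(q_i))\bigr)\cdot c(\ell)$. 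The surviving term in the alternating sum is the point; this is the actual work, and your sketch does not locate it.

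Second, and more seriously, your argument for the $\ell$-independence of the constant multiple would not work. You propose comparing $\stark^{\mathrm{Flach},\ell}$ and $\stark^{\mathrm{Flach},\ell'}$ by ``matching singular residues through relations among $c(\ell)$, $c(\ell')$, $c(\ell\ell')$ supplied in \S\ref{sec:flach-zeta-elements}''. No such norm- or Euler-system-style relations among Flach classes at different auxiliary primes are supplied there (and their unavailability is exactly the historical obstruction that motivates the Stark-system detour). In the paper the $\ell$-independence is immediate and does not require any comparison between two systems: the normalising constant is computed to be
\[
\dfrac{p^\lambda}{\psi_{\ell}\circ\mathrm{loc}^s_\ell(c(\ell))} \;=\; \dfrac{p^\lambda}{\mathrm{deg}(\phi)},
\]
where $\lambda = \mathrm{length}_{\mathbb{Z}_p}\mathrm{Sel}_{\mathcal{F}_{\mathrm{BK}}}(\mathbb{Q},\mathrm{ad}^0(E[p^k]))$, and the second equality is exactly Weston's explicit reciprocity law (Proposition \ref{prop:explicit-reciprocity-law}). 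Neither $\lambda$ nor $\mathrm{deg}(\phi)$ involves $\ell$. This also explains why the constant being a $p$-adic unit is \emph{equivalent} to the adjoint Bloch--Kato conjecture, a feature your route would not surface.
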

 We call $\stark^{\mathrm{Flach}', \ell}$ the primitive normalization of the \textbf{Flach--Stark system} $\stark^{\mathrm{Flach}, \ell}$. 
\begin{proof}
We explicitly construct $\stark^{\mathrm{Flach}, \ell}$ from Flach's zeta elements and define 
$\stark^{\mathrm{Flach}', \ell}$ by multiplying the constant multiple in \S\ref{sec:construction}.
\end{proof}
In the rest of this section, we fix a prime $\ell$ satisfying the conditions in Theorem \ref{thm:flach-stark}.
Let $\ks^{\mathrm{Flach}', \ell}$ be the the primitive normalization of the (non-trivial) \textbf{Flach--Kolyvagin system} $\ks^{\mathrm{Flach}, \ell}$ corresponding to $\stark^{\mathrm{Flach}', \ell}$ under the isomorphism between the modules of Stark systems and Kolyvagin systems under our setting \cite[Rem. 11.5 and Thm. 12.4]{mazur-rubin-control}. 
Indeed, the constant multiple in Theorem \ref{thm:flach-stark} is designed to deduce the following formula for the exact bound of the genuine Bloch--Kato Selmer group in terms of $\ks^{\mathrm{Flach}', \ell}$.
Let 
$$\mathrm{loc}^s_\ell : \mathrm{H}^1(\mathbb{Q}, \mathrm{Sym}^2(T_pE) ) \to
\mathrm{H}^1(\mathbb{Q}_\ell, \mathrm{Sym}^2(T_pE) ) \to
\mathrm{H}^1_{/f}(\mathbb{Q}_\ell, \mathrm{Sym}^2(T_pE) )  $$
be the singular quotient of the localization map at $\ell$ where the first map is the localization at $\ell$ and the second map is the natural quotient, and
$\mathrm{H}^1_{/f}(\mathbb{Q}_\ell, \mathrm{Sym}^2(T_pE) )  = \frac{\mathrm{H}^1(\mathbb{Q}_\ell, \mathrm{Sym}^2(T_pE) ) }{\mathrm{H}^1_f(\mathbb{Q}_\ell, \mathrm{Sym}^2(T_pE) ) }$.
\begin{cor}[Exact bound] \label{cor:exact-selmer-size}
Let 
$$\mathrm{loc}^s_\ell ( \ks^{\mathrm{Flach}', \ell} ) = \left\lbrace \mathrm{loc}^s_\ell ( \kappa^{\mathrm{Flach}, \ell}_n ) : n \in \mathcal{N}^{\mathrm{Flach},(\ell)}_1 \right\rbrace$$
be the singular quotient of the localization of $\ks^{\mathrm{Flach}, \ell}$ at $\ell$ where $\mathcal{N}^{\mathrm{Flach},(\ell)}_1$ is defined in Definition \ref{defn:flach-prime}.
Then
$$\mathrm{ord}_p ( \mathrm{loc}^s_\ell ( \kappa^{\mathrm{Flach}', \ell}_1 ) ) = \mathrm{length}_{\mathbb{Z}_p} ( \mathrm{Sel}(\mathbb{Q}, \mathrm{ad}^0(E[p^\infty])) ) $$
where $\mathrm{Sel}(\mathbb{Q}, \mathrm{ad}^0(E[p^\infty]))$ is the adjoint Bloch--Kato Selmer group and
$\mathrm{ord}_{p} ( \mathrm{loc}^s_\ell(\kappa^{\mathrm{Flach}', \ell}_1) )$ is the $p$-divisibility index of $\mathrm{loc}^s_\ell(\kappa^{\mathrm{Flach}', \ell}_1)$ in $\mathrm{H}^1_{/f}(\mathbb{Q}_\ell, \mathrm{Sym}^2(T_pE) )$.
 \end{cor}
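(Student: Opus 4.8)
The plan is to deduce the corollary from the structure theory of primitive rank one Kolyvagin systems together with one application of Poitou--Tate global duality. Write $\mathcal{F}$ for the Bloch--Kato Selmer structure on $\mathrm{Sym}^2(T_pE)$ relaxed at $\ell$. Because $\ell\equiv 1\pmod p$ and $a_\ell(E)\not\equiv\pm 2\pmod p$, the prime $\ell$ is a useful Kolyvagin prime, so relaxing the local condition there raises the core rank from $0$ to $1$; hence $\mathcal{F}$ has core rank one, and by Theorem~\ref{thm:flach-stark} (whose needed working hypotheses of Mazur--Rubin are checked in \S\ref{sec:verifying-hypotheses}) $\ks^{\mathrm{Flach},\ell}$ is a non-trivial \emph{primitive} Kolyvagin system for $(\mathrm{Sym}^2(T_pE),\mathcal{F})$. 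The Selmer structure dual to $\mathcal{F}$ on $\mathrm{ad}^0(E[p^\infty])$ is the $\ell$-strict adjoint Bloch--Kato structure, so $H^1_{\mathcal{F}^*}(\mathbb{Q},\mathrm{ad}^0(E[p^\infty]))=\mathrm{Sel}_{\ell\textrm{-}\mathrm{str}}(\mathbb{Q},\mathrm{ad}^0(E[p^\infty]))$, and applying the Mazur--Rubin structure theorem for primitive Kolyvagin systems in core rank one (\cite{mazur-rubin-book}; equivalently, via the equivalence between Stark and Kolyvagin systems \cite{mazur-rubin-control}) to $\ks^{\mathrm{Flach},\ell}$ shows this group is finite and that
\[
\mathrm{length}_{\mathbb{Z}_p}\!\left(H^1_{\mathcal{F}}(\mathbb{Q},\mathrm{Sym}^2(T_pE))\big/\mathbb{Z}_p\cdot\kappa^{\mathrm{Flach},\ell}_1\right)=\mathrm{length}_{\mathbb{Z}_p}\!\left(\mathrm{Sel}_{\ell\textrm{-}\mathrm{str}}(\mathbb{Q},\mathrm{ad}^0(E[p^\infty]))\right).
\]

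I would then identify the two free rank one $\mathbb{Z}_p$-modules in play. Since $\overline\rho$ is irreducible and $p\geq 5$, $H^0(\mathbb{Q},\mathrm{Sym}^2 E[p])=0$, so $H^1_{\mathcal{F}}(\mathbb{Q},\mathrm{Sym}^2(T_pE))$ is $\mathbb{Z}_p$-torsion-free; finiteness of $\mathrm{Sel}_{\ell\textrm{-}\mathrm{str}}(\mathbb{Q},\mathrm{ad}^0(E[p^\infty]))$ and the core rank formula make its rank equal to $1$, so it is free of rank one over $\mathbb{Z}_p$. Likewise, since $a_\ell(E)\not\equiv\pm 2\pmod p$ the eigenvalue $\ell$ of $\mathrm{Fr}_\ell$ on $\mathrm{Sym}^2(T_pE)$ is separated modulo $p$ from the eigenvalues $\alpha^2,\beta^2$, so $H^1_{/f}(\mathbb{Q}_\ell,\mathrm{Sym}^2(T_pE))\cong\ker\!\left(\mathrm{Fr}_\ell-\ell\mid\mathrm{Sym}^2(T_pE)\right)$ is also free of rank one over $\mathbb{Z}_p$. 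Comparing $\mathbb{Z}_p$-ranks in the exact sequence
\[
0\to H^1_{\mathcal{F}_{\mathrm{BK}}}(\mathbb{Q},\mathrm{Sym}^2(T_pE))\to H^1_{\mathcal{F}}(\mathbb{Q},\mathrm{Sym}^2(T_pE))\xrightarrow{\mathrm{loc}^s_\ell}H^1_{/f}(\mathbb{Q}_\ell,\mathrm{Sym}^2(T_pE))
\]
forces $H^1_{\mathcal{F}_{\mathrm{BK}}}(\mathbb{Q},\mathrm{Sym}^2(T_pE))=0$; in particular $\mathrm{loc}^s_\ell$ is injective on $H^1_{\mathcal{F}}(\mathbb{Q},\mathrm{Sym}^2(T_pE))$, with finite cokernel.

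The remaining ingredient is the Poitou--Tate exact sequence comparing $\mathcal{F}$ with the unrelaxed Bloch--Kato structure on $\mathrm{Sym}^2(T_pE)$ (equivalently, the $\ell$-strict with the genuine structure on $\mathrm{ad}^0(E[p^\infty])$), which, using the vanishing just obtained, collapses to
\begin{align*}
0 &\to H^1_{\mathcal{F}}(\mathbb{Q},\mathrm{Sym}^2(T_pE))\xrightarrow{\mathrm{loc}^s_\ell}H^1_{/f}(\mathbb{Q}_\ell,\mathrm{Sym}^2(T_pE)) \\
 &\to\mathrm{Sel}(\mathbb{Q},\mathrm{ad}^0(E[p^\infty]))^\vee\to\mathrm{Sel}_{\ell\textrm{-}\mathrm{str}}(\mathbb{Q},\mathrm{ad}^0(E[p^\infty]))^\vee\to 0,
\end{align*}
so that $\mathrm{length}_{\mathbb{Z}_p}(\mathrm{coker}\,\mathrm{loc}^s_\ell)=\mathrm{length}_{\mathbb{Z}_p}(\mathrm{Sel}(\mathbb{Q},\mathrm{ad}^0(E[p^\infty])))-\mathrm{length}_{\mathbb{Z}_p}(\mathrm{Sel}_{\ell\textrm{-}\mathrm{str}}(\mathbb{Q},\mathrm{ad}^0(E[p^\infty])))$. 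Since $H^1_{\mathcal{F}}(\mathbb{Q},\mathrm{Sym}^2(T_pE))$ and $H^1_{/f}(\mathbb{Q}_\ell,\mathrm{Sym}^2(T_pE))$ are free cyclic over $\mathbb{Z}_p$ and $\mathrm{loc}^s_\ell$ is injective, $\mathrm{ord}_p(\mathrm{loc}^s_\ell(\kappa^{\mathrm{Flach},\ell}_1))$ is the index of $\kappa^{\mathrm{Flach},\ell}_1$ in $H^1_{\mathcal{F}}(\mathbb{Q},\mathrm{Sym}^2(T_pE))$ plus $\mathrm{length}_{\mathbb{Z}_p}(\mathrm{coker}\,\mathrm{loc}^s_\ell)$; adding the identity of the first paragraph gives $\mathrm{ord}_p(\mathrm{loc}^s_\ell(\kappa^{\mathrm{Flach},\ell}_1))=\mathrm{length}_{\mathbb{Z}_p}(\mathrm{Sel}(\mathbb{Q},\mathrm{ad}^0(E[p^\infty])))$, as asserted. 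Routing the equality through $\mathrm{loc}^s_\ell$, rather than through the bare index of $\kappa^{\mathrm{Flach},\ell}_1$, is exactly how the normalization of $\stark^{\mathrm{Flach},\ell}$ chosen in Theorem~\ref{thm:flach-stark} upgrades the $\ell$-strict adjoint Selmer group to the genuine one.

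The step I expect to be the main obstacle is the Poitou--Tate bookkeeping in the third paragraph: one must pin down precisely which local condition at $\ell$ separates the $\ell$-strict and genuine adjoint Bloch--Kato Selmer groups, verify carefully (through the $\mathrm{Fr}_\ell$-eigenvalues $\alpha^2,\ell,\beta^2$ on $\mathrm{Sym}^2(T_pE)$, using $\ell\equiv 1\pmod p$ and $a_\ell(E)\not\equiv\pm 2\pmod p$) that $H^1_{/f}(\mathbb{Q}_\ell,\mathrm{Sym}^2(T_pE))$ is free cyclic, and match this local term against $\mathrm{coker}\,\mathrm{loc}^s_\ell$ in the duality sequence without a sign or index slip. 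A secondary point is to run the whole argument integrally over $\mathbb{Z}_p$ rather than modulo $p^M$, so that all lengths are finite and the structure theorem applies verbatim; this rests on the vanishing of the relevant $H^0$'s, which is where $p\geq 5$, the irreducibility of $\overline\rho$, and $N=N(\overline\rho)$ enter.
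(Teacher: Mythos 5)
Your argument is correct, and it takes a route that is genuinely different from the one given in the paper. The paper's proof is essentially an unwinding of the definitions: the primitive normalization in \S\ref{subsec:optimal-normalization} sets $\kappa^{\mathrm{Flach},\ell}_1 = \dfrac{p^\lambda}{\psi_\ell \circ \mathrm{loc}^s_\ell(c(\ell))}\cdot c(\ell)$ where $\lambda = \mathrm{length}_{\mathbb{Z}_p}\mathrm{Sel}(\mathbb{Q},\mathrm{ad}^0(E[p^\infty]))$, so that applying $\mathrm{loc}^s_\ell$ and taking $\mathrm{ord}_p$ lands directly on $\lambda$ via the displayed \emph{tautological bound}; the corollary is then immediate from the construction, with no further Selmer-theoretic input. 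You instead rebuild the identity from abstract structural facts: you invoke the primitivity of $\ks^{\mathrm{Flach},\ell}$ together with the Mazur--Rubin core-rank-one structure theorem to obtain $\mathrm{length}\bigl(H^1_{\mathcal{F}}/\mathbb{Z}_p\kappa^{\mathrm{Flach},\ell}_1\bigr) = \mathrm{length}\,\mathrm{Sel}_{\ell\textrm{-}\mathrm{str}}$, use the rank computation ($H^1_{\mathcal{F}}$ and $H^1_{/f}(\mathbb{Q}_\ell,\cdot)$ both free of rank one, $\mathrm{loc}^s_\ell$ injective) to write $\mathrm{ord}_p(\mathrm{loc}^s_\ell\kappa_1)$ as the sum of that index and $\mathrm{length}(\mathrm{coker}\,\mathrm{loc}^s_\ell)$, and close the loop with the Poitou--Tate sequence (\ref{eqn:global-duality-one-ell}) giving $\mathrm{length}(\mathrm{coker}\,\mathrm{loc}^s_\ell) = \mathrm{length}\,\mathrm{Sel} - \mathrm{length}\,\mathrm{Sel}_{\ell\textrm{-}\mathrm{str}}$. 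The two lengths in $\ell$-strict cancel and you get $\lambda$, as required. What you gain is that the corollary is seen to be forced by the abstract properties (primitivity, core rank one, the $\ell$-relaxed/$\ell$-strict duality) rather than by the particular normalization; what the paper's route gains is brevity, since the normalization was chosen precisely to make this equality hold on the nose. Note that your three ingredients are all present, in scattered form, in \S\ref{subsec:optimal-normalization} (they are used there to \emph{determine} the constant multiple), so your argument is best read as an abstract reconstruction of the paper's internal reasoning rather than an independent discovery; in particular, the only place the explicit construction is essential in your version is in establishing primitivity via Theorem \ref{thm:flach-stark}, which you correctly flag as the input.
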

\begin{proof}
See $\S$\ref{subsec:cor-exact-selmer-size}.
\end{proof}
\begin{rem}
It is known that the primitivity of a (rank one) Kolyvagin system is closely related to showing the corresponding main conjecture \cite{mazur-rubin-book, kks,  sakamoto-p-selmer,kim-structure-selmer}. This principle would apply to our setting as follows if such a formulation is valid.
Let $\mathbb{Q}_\infty$ be the cyclotomic $\mathbb{Z}_p$-extension of $\mathbb{Q}$ and $\Lambda$ the cyclotomic Iwasawa algebra.
\emph{If} the $\Lambda$-adic deformation $\ks^{\mathrm{Flach}', \ell, \infty}$ of $\ks^{\mathrm{Flach}', \ell}$ exists
and $\kappa^{\mathrm{Flach}, \ell, \infty}_1$ is non-zero in $\widehat{\mathrm{Sel}_{\ell\textrm{-}\mathrm{rel}}}(\mathbb{Q}_\infty, \mathrm{Sym}^2(T_pE))$, then the \emph{hypothetical} main conjecture
$$``\mathrm{char}_{\Lambda} \left( \widehat{\mathrm{Sel}_{\ell\textrm{-}\mathrm{rel}}}(\mathbb{Q}_\infty, \mathrm{Sym}^2(T_pE)) / \Lambda \kappa^{\mathrm{Flach}', \ell, \infty}_1 \right)
= \mathrm{char}_{\Lambda} \left( \mathrm{Sel}_{\ell\textrm{-}\mathrm{str}}(\mathbb{Q}_\infty, \mathrm{ad}^0(E[p^\infty]))^\vee \right)"$$
follows from the primitivity of $\ks^{\mathrm{Flach}', \ell}$ \cite[$\S$5.3]{mazur-rubin-book}
where 
$\widehat{\mathrm{Sel}_{\ell\textrm{-}\mathrm{rel}}}(\mathbb{Q}_\infty, -) = \varprojlim_n\mathrm{Sel}_{\ell\textrm{-}\mathrm{rel}}(\mathbb{Q}_n, -)$
 with respect to the corestriction maps and
 $(-)^\vee$ is the Pontryagin dual.
However, we do not have any clue of the construction of $\ks^{\mathrm{Flach}', \ell, \infty}$ yet.
We may need to put the $p$-ordinary assumption for $E$ in order to have the correct formulation.
\end{rem}


Denote by $\mathrm{deg}(\phi)$ the minimal modular degree of $E$ where $\phi : X_0(N) \to E$ is the modular parametrization.
We obtain the following connection between Flach's zeta elements and the minimal modularity lifting theorem from Theorem \ref{thm:flach-stark}.
\begin{cor}[Minimal modularity lifting] \label{cor:bloch-kato}
The following three statements are equivalent:
\begin{enumerate}
\item The constant multiple in Theorem \ref{thm:flach-stark} is a $p$-adic unit so that
$$\mathrm{ord}_p  ( \mathrm{loc}^s_\ell c(\ell) ) =  \mathrm{length}_{\mathbb{Z}_p} \mathrm{Sel}(\mathbb{Q}, \mathrm{ad}^0(E[p^\infty])) .$$
\item The Bloch--Kato conjecture  for the adjoint representation of $E$ holds, i.e.
$$\mathrm{ord}_p( \mathrm{deg}(\phi) ) = \mathrm{length}_{\mathbb{Z}_p} \mathrm{Sel}(\mathbb{Q}, \mathrm{ad}^0(E[p^\infty]))  .$$
\item The minimal deformation ring of $\overline{\rho}$ and the minimal Hecke algebra localized at the maximal ideal $\mathfrak{m}_{\overline{\rho}}$ associated to $\overline{\rho}$ are isomorphic as complete intersections, i.e.
$$R^{\mathrm{min}}_{\overline{\rho}} \simeq \mathbb{T}^{\mathrm{min}}_{\mathfrak{m}_{\overline{\rho}}} .$$
\end{enumerate}
\end{cor}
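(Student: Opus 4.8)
The plan is to deduce $(1)\Leftrightarrow(2)$ formally from Corollary~\ref{cor:exact-selmer-size} together with the explicit reciprocity law for Flach's zeta elements, and to deduce $(2)\Leftrightarrow(3)$ from the numerical criterion of \cite{wiles} for complete intersections. For $(1)\Leftrightarrow(2)$, write $c\in\mathbb{Z}_p$ for the constant multiple in Theorem~\ref{thm:flach-stark}, so that by construction $\mathrm{loc}^s_\ell(\kappa^{\mathrm{Flach},\ell}_1)=c\cdot\mathrm{loc}^s_\ell(c(\ell))$ in $\mathrm{H}^1_{/f}(\mathbb{Q}_\ell,\mathrm{Sym}^2(T_pE))$. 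Comparing $p$-divisibility indices and applying Corollary~\ref{cor:exact-selmer-size} yields
$$\mathrm{length}_{\mathbb{Z}_p}\mathrm{Sel}(\mathbb{Q},\mathrm{ad}^0(E[p^\infty]))=\mathrm{ord}_p(c)+\mathrm{ord}_p(\mathrm{loc}^s_\ell(c(\ell))),$$
while Weston's computation of the explicit reciprocity law (recalled in \S\ref{sec:explicit-reciprocity-law}, cf.\ Theorem~\ref{thm:flach-zeta-elements}) identifies $\mathrm{ord}_p(\mathrm{loc}^s_\ell(c(\ell)))=\mathrm{ord}_p(\mathrm{deg}(\phi))$. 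Hence $\mathrm{ord}_p(c)=\mathrm{length}_{\mathbb{Z}_p}\mathrm{Sel}(\mathbb{Q},\mathrm{ad}^0(E[p^\infty]))-\mathrm{ord}_p(\mathrm{deg}(\phi))$, so $c$ is a $p$-adic unit if and only if statement (2) holds, and in that case the displayed identity in (1) follows at once from the two equalities above. (The $\ell$-independence asserted in Theorem~\ref{thm:flach-stark} is reflected in the absence of $\ell$ from the resulting formula for $\mathrm{ord}_p(c)$.)

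For $(2)\Leftrightarrow(3)$, recall that $\mathbb{T}^{\mathrm{min}}_{\mathfrak{m}_{\overline{\rho}}}$ is finite flat over $\mathbb{Z}_p$ and reduced, and that the Galois representation it carries induces a surjection $R^{\mathrm{min}}_{\overline{\rho}}\twoheadrightarrow\mathbb{T}^{\mathrm{min}}_{\mathfrak{m}_{\overline{\rho}}}$ ($\overline{\rho}$ being modular since $E$ is). Composing with the augmentation $\mathbb{T}^{\mathrm{min}}_{\mathfrak{m}_{\overline{\rho}}}\twoheadrightarrow\mathbb{Z}_p$ attached to $E$ gives an augmentation of $R^{\mathrm{min}}_{\overline{\rho}}$; let $\wp$ be its kernel and $\eta\subseteq\mathbb{Z}_p$ the congruence ideal of $\mathbb{T}^{\mathrm{min}}_{\mathfrak{m}_{\overline{\rho}}}$ at that augmentation. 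On the deformation side, Mazur's theory identifies $\wp/\wp^2$ with the Pontryagin dual of $\mathrm{Sel}(\mathbb{Q},\mathrm{ad}^0(E[p^\infty]))$ --- the minimality of the deformation problem being exactly what matches the Bloch--Kato local conditions --- so $\mathrm{length}_{\mathbb{Z}_p}(\wp/\wp^2)=\mathrm{length}_{\mathbb{Z}_p}\mathrm{Sel}(\mathbb{Q},\mathrm{ad}^0(E[p^\infty]))$, which is finite and hence forces $R^{\mathrm{min}}_{\overline{\rho}}$ to be finite over $\mathbb{Z}_p$. On the Hecke side, under our hypotheses ($p\geq5$ a good reduction prime, $\overline{\rho}$ irreducible, $N=N(\overline{\rho})$) one has $\mathrm{ord}_p(\eta)=\mathrm{ord}_p(\mathrm{deg}(\phi))$. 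Since the numerical criterion gives $\mathrm{length}_{\mathbb{Z}_p}(\wp/\wp^2)\geq\mathrm{length}_{\mathbb{Z}_p}(\mathbb{Z}_p/\eta)$ with equality if and only if $R^{\mathrm{min}}_{\overline{\rho}}\twoheadrightarrow\mathbb{T}^{\mathrm{min}}_{\mathfrak{m}_{\overline{\rho}}}$ is an isomorphism and both rings are complete intersections, and since statement (2) is --- via the two identifications --- precisely this equality, we obtain $(2)\Leftrightarrow(3)$.

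Most of the argument is routine once this dictionary is set up. The steps requiring genuine care are the bookkeeping in $(2)\Leftrightarrow(3)$ --- verifying that the minimal deformation condition is dual to $\mathrm{Sel}(\mathbb{Q},\mathrm{ad}^0(E[p^\infty]))$, that $\mathbb{T}^{\mathrm{min}}_{\mathfrak{m}_{\overline{\rho}}}$ and its congruence ideal satisfy the hypotheses needed to apply the numerical criterion, and that $\mathrm{ord}_p(\eta)=\mathrm{ord}_p(\mathrm{deg}(\phi))$ holds under the running assumptions --- together with recalling enough of Weston's explicit reciprocity law to pin down $\mathrm{ord}_p(\mathrm{loc}^s_\ell(c(\ell)))=\mathrm{ord}_p(\mathrm{deg}(\phi))$, the single input that ties Flach's zeta elements to the modular degree.
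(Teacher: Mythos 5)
Your proof is correct and follows essentially the same route as the paper: for $(1)\Leftrightarrow(2)$ you invoke Corollary~\ref{cor:exact-selmer-size} together with the explicit reciprocity law $\mathrm{ord}_p(\mathrm{loc}^s_\ell(c(\ell)))=\mathrm{ord}_p(\deg(\phi))$, which is just a repackaging of the paper's direct appeal to the computation in \S\ref{subsec:optimal-normalization} showing the constant equals $p^\lambda/\deg(\phi)$; and for $(2)\Leftrightarrow(3)$ you spell out the content of Wiles' numerical criterion (the identification of $\wp/\wp^2$ with the dual adjoint Selmer group, the congruence ideal, and $\mathrm{ord}_p(\eta)=\mathrm{ord}_p(\deg(\phi))$), where the paper simply cites \cite[Thm.~5.3]{ddt}. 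The only substantive difference is that you unwind the citation rather than leaving it as a black box.
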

\begin{proof}
See $\S$\ref{subsec:cor-bloch-kato}.
\end{proof}
Of course, all the statements in Corollary \ref{cor:bloch-kato} are true thanks to the work of Wiles and Taylor--Wiles \cite{wiles, taylor-wiles}.
One remarkable feature of Corollary \ref{cor:bloch-kato} is that the modularity lifting theorem can be deduced from a certain property regarding Flach's zeta elements (finally!). 
More precisely, the final step in deducing the modularity lifting theorem from our Stark system argument is to control the constant multiple.
 
On the other hand,  the modularity lifting theorem (Corollary \ref{cor:bloch-kato}.(3)) yields the explicit construction of the rank one primitive Kolyvagin system for $ \mathrm{Sym}^2(T_pE)$ extending Flach's zeta elements (without introducing any constant multiple) for every elliptic curve $E$ which occurs in the minimal deformation space of $\overline{\rho}$\footnote{We thank H. Hida for bringing this deformation-theoretic observation to our attention. }. 

In this sense, Corollary \ref{cor:bloch-kato} illustrates both the possibility and the limitation of the approach towards modularity lifting theorem based on Flach's zeta elements.

Although we do not know whether or how $\mathrm{loc}^s_\ell ( \ks^{\mathrm{Flach}', \ell} )$ determines the structure of $\mathrm{Sel}(\mathbb{Q}, \mathrm{ad}^0(E[p^\infty]))$ in general yet\footnote{In \cite{kim-structure-selmer}, the self-duality of Galois representations is used in an essential way.  We are informed that R. Sakamoto is working on the general structure theorem with rank \emph{zero} Stark/Kolyvagin systems,  and it seems to be closely related to our approach.}, we still have the following structural application under the $p$-indivisibility assumption on $c(\ell)$. 
\begin{cor}[Cyclicity] \label{cor:cyclicity}
Suppose that any (equivalent) statement of Corollary \ref{cor:bloch-kato} holds.
If we further assume that $c(\ell) \in \mathrm{Sel}_{\ell\textrm{-}\mathrm{rel}}(\mathbb{Q} , \mathrm{Sym}^2(T_pE))$ is not divisible by $p$ for one $\ell$, then the adjoint Selmer group is cyclic, i.e.
$$\mathrm{Sel}(\mathbb{Q}, \mathrm{ad}^0(E[p^\infty])) \simeq \mathbb{Z}_p / \mathrm{deg}(\phi) \mathbb{Z}_p $$
where $\mathrm{Sel}_{\ell\textrm{-}\mathrm{rel}}(\mathbb{Q} , \mathrm{Sym}^2(T_pE))$ is the $\ell$-relaxed compact symmetric square Bloch--Kato Selmer group.
\end{cor}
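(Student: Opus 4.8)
The plan is to extract the statement from Theorem~\ref{thm:flach-stark} and Corollaries~\ref{cor:exact-selmer-size}--\ref{cor:bloch-kato} together with Poitou--Tate global duality: under the $p$-indivisibility of $c(\ell)$ the auxiliary group $\mathrm{Sel}_{\ell\textrm{-}\mathrm{str}}(\mathbb{Q},\mathrm{ad}^0(E[p^\infty]))$ vanishes, and then cyclicity of $\mathrm{Sel}(\mathbb{Q},\mathrm{ad}^0(E[p^\infty]))$ falls out of the duality sequence.

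First I would recall the structural inputs already in place (cf.\ $\S$\ref{sec:verifying-hypotheses}): the compact $\ell$-relaxed module $\mathrm{Sel}_{\ell\textrm{-}\mathrm{rel}}(\mathbb{Q},\mathrm{Sym}^2(T_pE))$ is free of rank one over $\mathbb{Z}_p$ --- this is exactly the core rank one input which also makes the Stark system module free of rank one --- so in particular $\mathrm{H}^1_f(\mathbb{Q},\mathrm{Sym}^2(T_pE))=0$; and, since $\ell\equiv 1\Mod{p}$ and $a_\ell(E)\not\equiv\pm2\Mod{p}$, the singular quotient $\mathrm{H}^1_{/f}(\mathbb{Q}_\ell,\mathrm{Sym}^2(T_pE))$ is likewise free of rank one over $\mathbb{Z}_p$ (Frobenius at $\ell$ has $\ell$ among its eigenvalues on $\mathrm{Sym}^2(T_pE)$, the other two being $\not\equiv\ell\Mod{p}$, so its invariants on the $(-1)$-twist form a free rank one direct summand). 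I would then run the Poitou--Tate exact sequence for the two Selmer structures that differ only at $\ell$ (relaxed versus Bloch--Kato on $\mathrm{Sym}^2(T_pE)$, equivalently strict versus Bloch--Kato on $\mathrm{ad}^0(E[p^\infty])$); using $\mathrm{H}^1_f(\mathbb{Q},\mathrm{Sym}^2(T_pE))=0$ it reads
\begin{multline*}
0 \to \mathrm{Sel}_{\ell\textrm{-}\mathrm{rel}}(\mathbb{Q},\mathrm{Sym}^2(T_pE)) \xrightarrow{\mathrm{loc}^s_\ell} \mathrm{H}^1_{/f}(\mathbb{Q}_\ell,\mathrm{Sym}^2(T_pE)) \\
\to \mathrm{Sel}(\mathbb{Q},\mathrm{ad}^0(E[p^\infty]))^\vee \to \mathrm{Sel}_{\ell\textrm{-}\mathrm{str}}(\mathbb{Q},\mathrm{ad}^0(E[p^\infty]))^\vee \to 0 ,
\end{multline*}
where the crucial bookkeeping point is that it is $\mathrm{Sel}$, and not $\mathrm{Sel}_{\ell\textrm{-}\mathrm{str}}$, that receives $\mathrm{coker}(\mathrm{loc}^s_\ell)$.

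Next, assuming any equivalent form of Corollary~\ref{cor:bloch-kato}, the constant multiple of Theorem~\ref{thm:flach-stark} is a $p$-adic unit --- so $\kappa^{\mathrm{Flach},\ell}_1$ is a $p$-adic-unit multiple of $c(\ell)$ --- and by part~(2), $\mathrm{length}_{\mathbb{Z}_p}\mathrm{Sel}(\mathbb{Q},\mathrm{ad}^0(E[p^\infty]))=\mathrm{ord}_p(\mathrm{deg}(\phi))$. Assuming further that $c(\ell)$ is not divisible by $p$ in $\mathrm{Sel}_{\ell\textrm{-}\mathrm{rel}}(\mathbb{Q},\mathrm{Sym}^2(T_pE))$, it is a $\mathbb{Z}_p$-generator of that free rank one module, hence $\mathrm{loc}^s_\ell$ has image $p^{m}\,\mathrm{H}^1_{/f}(\mathbb{Q}_\ell,\mathrm{Sym}^2(T_pE))$ with $m:=\mathrm{ord}_p(\mathrm{loc}^s_\ell(\kappa^{\mathrm{Flach},\ell}_1))$, and $m=\mathrm{length}_{\mathbb{Z}_p}\mathrm{Sel}(\mathbb{Q},\mathrm{ad}^0(E[p^\infty]))$ by Corollary~\ref{cor:exact-selmer-size}. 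Thus $\mathrm{coker}(\mathrm{loc}^s_\ell)\cong\mathbb{Z}_p/p^m\mathbb{Z}_p$ already accounts for the full length of $\mathrm{Sel}(\mathbb{Q},\mathrm{ad}^0(E[p^\infty]))$, so the exact sequence forces $\mathrm{Sel}_{\ell\textrm{-}\mathrm{str}}(\mathbb{Q},\mathrm{ad}^0(E[p^\infty]))=0$ (equivalently, this is the bottom-level instance of the structure-determining property asserted in Theorem~\ref{thm:flach-stark}: the bottom Stark-system invariant equals both the length of $\mathrm{Sel}_{\ell\textrm{-}\mathrm{str}}(\mathbb{Q},\mathrm{ad}^0(E[p^\infty]))$ and the $p$-divisibility index of the bottom class of $\stark^{\mathrm{Flach},\ell}$, the latter being $0$). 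Therefore the sequence degenerates to $\mathrm{Sel}(\mathbb{Q},\mathrm{ad}^0(E[p^\infty]))^\vee\cong\mathrm{coker}(\mathrm{loc}^s_\ell)\cong\mathbb{Z}_p/p^m\mathbb{Z}_p$, which is cyclic, and taking Pontryagin duals and using $m=\mathrm{ord}_p(\mathrm{deg}(\phi))$ gives $\mathrm{Sel}(\mathbb{Q},\mathrm{ad}^0(E[p^\infty]))\cong\mathbb{Z}_p/\mathrm{deg}(\phi)\mathbb{Z}_p$.

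I do not expect a genuine obstacle once Theorem~\ref{thm:flach-stark} and its corollaries are in hand; the points needing attention are the freeness (not merely the rank) of $\mathrm{Sel}_{\ell\textrm{-}\mathrm{rel}}(\mathbb{Q},\mathrm{Sym}^2(T_pE))$ and of $\mathrm{H}^1_{/f}(\mathbb{Q}_\ell,\mathrm{Sym}^2(T_pE))$ --- this is what lets ``$c(\ell)\notin p\cdot\mathrm{Sel}_{\ell\textrm{-}\mathrm{rel}}(\mathbb{Q},\mathrm{Sym}^2(T_pE))$'' be promoted to ``$c(\ell)$ generates'', and thence to the vanishing of $\mathrm{Sel}_{\ell\textrm{-}\mathrm{str}}(\mathbb{Q},\mathrm{ad}^0(E[p^\infty]))$ --- and keeping straight the variance in the Poitou--Tate sequence.
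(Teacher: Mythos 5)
Your argument is correct and follows essentially the same route as the paper's proof in \S\ref{subsec:cor-cyclicity}: both deduce the vanishing of $\mathrm{Sel}_{\ell\textrm{-}\mathrm{str}}(\mathbb{Q}, \mathrm{ad}^0(E[p^\infty]))$ from the $p$-indivisibility of $c(\ell)$ together with the $p$-adic-unit constant, and then invoke Poitou--Tate duality at $\ell$ plus the cyclicity of $\mathrm{H}^1_{/f}(\mathbb{Q}_\ell,\mathrm{Sym}^2(E[p^k]))$ (Lemma \ref{lem:cartesian}) to conclude. The only difference is bookkeeping: the paper reads the vanishing directly off (\ref{eqn:exact-bound-ell-strict}) and then uses the three-term sequence (\ref{eqn:global-duality-one-ell}), whereas you reconstruct the same vanishing by a length count in the four-term Poitou--Tate sequence via Corollaries \ref{cor:exact-selmer-size} and \ref{cor:bloch-kato}. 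One small inaccuracy worth flagging: $\mathrm{H}^1_f(\mathbb{Q},\mathrm{Sym}^2(T_pE))=0$ does not follow ``in particular'' from the core-rank-one statement for the $\ell$-relaxed structure as you assert; one needs Flach's finiteness (Theorem \ref{thm:flach-annihilation}) to force the $\mathbb{Z}_p$-rank of $\mathrm{H}^1_f$ to zero and then torsion-freeness from (H.1) to kill it. In fact your length-counting argument works without this vanishing, since the truncated sequence beginning with $\mathrm{coker}(\mathrm{loc}^s_\ell)$ is already exact, so the imprecision is harmless.
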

\begin{proof}
See $\S$\ref{subsec:cor-cyclicity}.
\end{proof}
%

This cyclicity can be understood as the structural refinement of the Bloch--Kato conjecture (cf. \cite{kim-gross-zagier, kim-structure-selmer}).
We make the following question (not conjecture).
\begin{ques} \label{ques:cyclicity}
Does the $p$-indivisibility assumption in Corollary \ref{cor:cyclicity} hold in general?
It is equivalent to proving that $\mathrm{Sel}_{\ell\textrm{-}\mathrm{str}}(\mathbb{Q}, \mathrm{ad}^0(E[p^\infty]))$ is trivial.
\end{ques}
The higher weight generalization of Theorem \ref{thm:flach-stark} and its interpretation in terms of rank \emph{zero} Stark/Kolyvagin systems is being investigated in a joint work in progress with Ryotaro Sakamoto.
\section{Flach's zeta elements and the explicit reciprocity law} \label{sec:flach-zeta-elements}
Let $G_k \subseteq \mathrm{GL}_2(\mathbb{Z}/p^k\mathbb{Z})$ be the image of $G_{\mathbb{Q}}$ under the mod $p^k$ reduction of $\rho : G_{\mathbb{Q}} \to \mathrm{Aut}_{\mathbb{Z}_p} (T_pE) \simeq \mathrm{GL}_2(\mathbb{Z}_p)$.
The following annihilation result is the very starting point of this story.
\begin{thm}[Flach] \label{thm:flach-annihilation}
Assume that
$p$ is odd,
$\overline{\rho}$ is absolutely irreducible,
$p$ does not divide $N$, and
 $N = N(\overline{\rho})$.
If we further assume that $N$ is square-free and $\mathrm{H}^1(G_k, \mathrm{ad}^0(E[p^k])) = 0$ for all $k \geq 1$, then
$$\mathrm{deg}(\phi) \cdot \mathrm{Sel}(\mathbb{Q}, \mathrm{ad}^0(E[p^\infty])) = 0.$$
In particular, $\mathrm{Sel}(\mathbb{Q}, \mathrm{ad}^0(E[p^\infty])) $ is finite, and it is trivial for all but finitely many primes $p$.
\end{thm}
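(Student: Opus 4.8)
\medskip
\noindent\textbf{Proof proposal.}
This is the classical ``geometric Euler system'' bound of Flach, and the plan is to reconstruct its proof. First I would record the duality input: one has $\mathrm{ad}^0(E[p^\infty]) \cong \mathrm{Sym}^2(E[p^\infty]) \otimes \chi_{\mathrm{cyc}}^{-1}$, and $\mathrm{Sym}^2(T_pE)$ is (up to a Tate twist) the Tate dual of $\mathrm{ad}^0(E[p^\infty])$; hence Poitou--Tate global duality identifies $\mathrm{Sel}(\mathbb{Q}, \mathrm{ad}^0(E[p^\infty]))^\vee$ with a Selmer group assembled from $\mathrm{H}^1(\mathbb{Q}, \mathrm{Sym}^2(T_pE))$ with prescribed (``strict''/``relaxed'') local conditions. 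So it suffices to produce, for each $M \geq 1$, enough global classes for $\mathrm{Sym}^2(T_pE)/p^M$ whose local behaviour annihilates the relevant quotient of $\mathrm{Sel}(\mathbb{Q}, \mathrm{ad}^0(E[p^\infty]))[p^M]$ up to the factor $\mathrm{deg}(\phi)$.

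Second, I would construct and analyse Flach's classes. Using the minimal modular parametrization $\phi : X_0(N) \to E$, one obtains from an explicit $K_2$-element (equivalently a Beilinson--Flach / modular-unit construction) on the modular curve a class valued in $\mathrm{Sym}^2$, and for each square-free product $n$ of primes $\ell$ with $(\ell, Np) = 1$, $\ell \equiv 1 \pmod{p}$, and $a_\ell(E) \not\equiv \pm 2 \pmod{p}$, a twist produces $c(n) \in \mathrm{H}^1(\mathbb{Q}, \mathrm{Sym}^2(T_pE)/p^M)$. Three things must be checked: (i) the local conditions --- $c(n)$ is unramified outside $np$, lands in $\mathrm{H}^1_f$ at $p$ (this uses good reduction at $p$), and lies in the correct local subgroup at the primes dividing $N$ (here square-freeness of $N$ together with $N = N(\overline{\rho})$ force the local representation at $\ell \mid N$ to be essentially Steinberg, so the local condition is automatic); (ii) the Euler-system finite--singular relation, namely that $\mathrm{loc}^s_\ell(c(n\ell))$ agrees, up to a unit, with the image of $\mathrm{loc}^f_\ell(c(n))$ under the finite--singular comparison twisted by the Euler factor $P_\ell(\mathrm{Frob}_\ell)$ --- which is a unit precisely under our hypotheses $\ell \equiv 1 \pmod{p}$ and $a_\ell(E) \not\equiv \pm 2 \pmod{p}$, equivalently $\mathrm{Sym}^2(E[p])^{\mathrm{Frob}_\ell}$ is one-dimensional; and (iii) the explicit reciprocity law at $p$, which computes the bottom class $c(1)$: the subgroup of $\mathrm{H}^1_f(\mathbb{Q}_p, \mathrm{Sym}^2(T_pE)/p^M)$ it generates has index precisely $p^{\mathrm{ord}_p(\mathrm{deg}(\phi))}$. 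With (i)--(iii) available, a Kolyvagin-derivative construction $\kappa_n = D_n c(n) = \left(\prod_{\ell \mid n} D_\ell\right) c(n)$ yields classes defined over $\mathbb{Q}$ whose singular part at $\ell \mid n$ is governed by $\kappa_{n/\ell}$.

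Third, the descent. Given $s \in \mathrm{Sel}(\mathbb{Q}, \mathrm{ad}^0(E[p^\infty]))$ of order $p^n$, I would invoke the hypothesis $\mathrm{H}^1(G_k, \mathrm{ad}^0(E[p^k])) = 0$ to propagate classes to and from the field cut out by $\rho \bmod p^k$, and then use Chebotarev to choose a prime $\ell \equiv 1 \pmod{p^n}$ with $a_\ell(E) \not\equiv \pm 2 \pmod{p}$ whose Frobenius detects $s$ (so $\mathrm{loc}_\ell(s)$ has order $p^n$) and has unit Euler factor. Applying the global reciprocity law $\sum_v \langle \mathrm{loc}_v(\kappa_\ell), \mathrm{loc}_v(s) \rangle = 0$, all local terms vanish except at $v = \ell$ and $v = p$: the $\ell$-term pairs $\mathrm{loc}^s_\ell(\kappa_\ell) \sim \mathrm{loc}^f_\ell(c(1))$ against $\mathrm{loc}_\ell(s)$ via the finite--singular relation, while the $p$-term is pinned down by (iii). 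Comparing $p$-adic valuations then forces $\mathrm{deg}(\phi) \cdot s = 0$, and letting $n$ grow gives $\mathrm{deg}(\phi) \cdot \mathrm{Sel}(\mathbb{Q}, \mathrm{ad}^0(E[p^\infty])) = 0$. The remaining assertions are formal: $\mathrm{Sel}(\mathbb{Q}, \mathrm{ad}^0(E[p^\infty]))$ is a cofinitely generated $\mathbb{Z}_p$-module, and one killed by the nonzero integer $\mathrm{deg}(\phi)$ is finite; since $p \nmid \mathrm{deg}(\phi)$ for all but finitely many $p$, for such $p$ the group is killed by a $p$-adic unit, hence trivial.

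I expect the main obstacle to be Step (iii) --- the precise reciprocity computation identifying the bottom class with $\mathrm{deg}(\phi)$ rather than merely with some nonzero multiple of it --- together with the verification in (i) that Flach's classes satisfy the right local condition at the primes dividing $N$; these are exactly the points where square-freeness of $N$ and the cohomological vanishing $\mathrm{H}^1(G_k, \mathrm{ad}^0(E[p^k])) = 0$ enter, and the former is precisely what obstructs an unconditional statement in the non-square-free case. (The subsequent sections of this paper instead repackage the finitely many classes $\{c(n)\}$ as a genuine Stark/Kolyvagin system, which automatically extends them and upgrades this annihilation into the exact bound; but for Theorem~\ref{thm:flach-annihilation} itself the classical derivative argument suffices once (i)--(iii) are in hand.)
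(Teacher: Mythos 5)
Your proposal contains a genuine conceptual error at its core: you assert the existence of a full Euler system $\{c(n)\}$ indexed by square-free products $n$ of Flach primes, satisfying the finite--singular relation (your item (ii)), and you then run a Kolyvagin-derivative construction $\kappa_n = D_n c(n)$. But the nonexistence (or, more precisely, the unproven status) of exactly this Euler system structure is the famous gap in Wiles' original attempt and the central motivating problem of this whole paper --- see the Introduction, where it is emphasized that ``the construction of the Euler system was incomplete, and it was regarded as a gap,'' and that even today ``the construction question of an \emph{Euler} system extending Flach's zeta element is extremely difficult.'' Flach only constructed the single classes $c(\ell)$ for one auxiliary prime $\ell$ at a time (Theorem \ref{thm:flach-zeta-elements}), and these are already defined over $\mathbb{Q}$; there is no descent from $\mathbb{Q}(\zeta_n)$ and no derivative operator to apply. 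Your items (ii) and the surrounding Kolyvagin machinery are therefore not available, and they are also not needed for the statement at hand.

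Flach's actual proof of the annihilation theorem is simpler: it is a ``partial Euler system'' (or, in Mazur--Weston's language, a Flach system) argument using only the classes $c(\ell)$. Given $s \in \mathrm{Sel}(\mathbb{Q}, \mathrm{ad}^0(E[p^\infty]))$ of order $p^n$, one uses the vanishing hypothesis $\mathrm{H}^1(G_k, \mathrm{ad}^0(E[p^k])) = 0$ together with Chebotarev to find a single Flach prime $\ell$ (with $\ell \equiv 1 \pmod{p^n}$) such that $\mathrm{loc}_\ell(s) \in \mathrm{H}^1_f(\mathbb{Q}_\ell, \mathrm{ad}^0(E[p^n]))$ has exact order $p^n$. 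Then in the global reciprocity sum $\sum_v \langle \mathrm{loc}_v(c(\ell)), \mathrm{loc}_v(s) \rangle = 0$ \emph{only the $\ell$-term survives}: at every $v \neq \ell$ (including $v = p$) both classes lie in the Bloch--Kato local conditions, which are mutual annihilators under the local pairing, so those terms vanish. In particular your claim that the $p$-term survives and is ``pinned down by an explicit reciprocity law at $p$'' is wrong; the reciprocity input in Flach's argument is the bound $\mathrm{ord}_p\bigl(\mathrm{loc}^s_\ell(c(\ell))\bigr) \leq \mathrm{ord}_p(\mathrm{deg}(\phi))$ on the \emph{singular} part at $\ell$ from Theorem \ref{thm:flach-zeta-elements}, not anything at $p$. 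Since $\mathrm{H}^1_{/f}(\mathbb{Q}_\ell, \mathrm{Sym}^2(E[p^n]))$ and $\mathrm{H}^1_f(\mathbb{Q}_\ell, \mathrm{ad}^0(E[p^n]))$ are cyclic of order $p^n$ and perfectly paired, $\mathrm{loc}^s_\ell(c(\ell))$ pairing to zero with $\mathrm{loc}_\ell(s)$ forces $p^n \mid \mathrm{deg}(\phi)$, i.e.\ $\mathrm{deg}(\phi) \cdot s = 0$. This is all one needs; precisely because the argument uses a single class rather than a Kolyvagin system, it yields only annihilation rather than the exact bound --- which is what the rest of the paper goes on to repair via Stark systems. (The paper itself does not reprove this theorem but cites \cite[Thm. 1]{flach-annihilation} and \cite[Thm. 1]{flach-thesis}.)
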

\begin{proof}
See \cite[Thm. 1]{flach-annihilation} and \cite[Thm. 1]{flach-thesis}.
\end{proof}
In order to obtain Theorem \ref{thm:flach-annihilation}, Flach constructed the following important cohomology classes, which we call \textbf{Flach's zeta elements}.
\begin{thm}[Flach's zeta elements] \label{thm:flach-zeta-elements}
Assume that
$p$ is odd,
$\overline{\rho}$ is absolutely irreducible,
$p$ does not divide $N$, and
 $N = N(\overline{\rho})$.
For a prime $\ell$ not dividing $2Np$, 
there exists a cohomology class 
$$c(\ell) \in \mathrm{Sel}_{\ell\textrm{-}\mathrm{rel}}(\mathbb{Q}, \mathrm{Sym}^2(T_pE))$$
such that
\begin{itemize}
\item $\mathrm{loc}_q(c(\ell)) \in \mathrm{H}^1_f(\mathbb{Q}_q, \mathrm{Sym}^2(T_pE))$ for any prime $q \neq \ell$, and
\item $\mathrm{ord}_{p} ( \mathrm{loc}^s_\ell(c(\ell)) ) \leq \mathrm{ord}_{p} (\mathrm{deg}(\phi))  + \mathrm{ord}_{p} ( \alpha_\ell - \beta_\ell ) $
\end{itemize}
where
 $\alpha_\ell$ and $ \beta_\ell$ are the roots of $X^2 - a_\ell(E)X +\ell$.
\end{thm}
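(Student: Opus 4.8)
The plan is to recall Flach's geometric construction of the classes $c(\ell)$ \cite{flach-thesis, flach-annihilation}, to check the Selmer conditions place by place using smooth base change and $p$-adic Hodge theory, and to read off the estimate at $\ell$ from the explicit reciprocity law, whose computation (due to Weston) is recalled in \S\ref{sec:explicit-reciprocity-law}. Concretely, starting from the minimal modular parametrization $\phi\colon X_0(N)\to E$ together with the extra level structure at $\ell$ and the associated canonical $\ell$-isogeny, Flach constructs a motivic cohomology class of modular origin --- a suitable combination of graphs of the degeneracy and isogeny correspondences attached to the universal elliptic curve over the modular curve of level $N\ell$, corrected so as to be null-homologous. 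Its $p$-adic \'etale regulator takes values in the Galois cohomology of an appropriate Tate twist of the \'etale cohomology of the variety involved; decomposing by the K\"unneth formula and applying the Hecke projector cut out by $\phi$ --- whose normalization accounts for the factor $\mathrm{deg}(\phi)$ --- one extracts the summand which, via the Weil pairing, is identified with $\mathrm{Sym}^2(T_pE)$. The image of the regulator class in $\mathrm{H}^1(\mathbb{Q},\mathrm{Sym}^2(T_pE))$ is $c(\ell)$.

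Next I would verify that $c(\ell)\in\mathrm{Sel}_{\ell\textrm{-}\mathrm{rel}}(\mathbb{Q},\mathrm{Sym}^2(T_pE))$ together with the first bullet, i.e. the local condition at every $q\neq\ell$. For $q\nmid N\ell p$ the underlying variety has smooth proper reduction at $q$, so the regulator class is unramified there; as the $f$-condition at a prime $q\neq p$ is the unramified one, this gives $\mathrm{loc}_q(c(\ell))\in\mathrm{H}^1_f(\mathbb{Q}_q,\mathrm{Sym}^2(T_pE))$. For $q\mid N$ with $q\neq\ell$, a local computation with the Tate curve, carried out by Flach, shows $\mathrm{loc}_q(c(\ell))$ still lies in the (here finite) group $\mathrm{H}^1_f(\mathbb{Q}_q,\mathrm{Sym}^2(T_pE))$. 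At $q=p$, since $p\nmid N\ell$ the relevant variety has good reduction at $p$, and the comparison theorems of $p$-adic Hodge theory place $\mathrm{loc}_p(c(\ell))$ in $\mathrm{H}^1_f(\mathbb{Q}_p,\mathrm{Sym}^2(T_pE))$, i.e. make it crystalline. At $q=\ell$, by contrast, the extra level structure forces bad reduction, so no local condition is imposed there; this is exactly why $c(\ell)$ lies in the $\ell$-relaxed Selmer group and, in general, acquires a nontrivial singular component at $\ell$.

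The heart of the matter is the second bullet, the estimate for the residue $\mathrm{loc}^s_\ell(c(\ell))\in\mathrm{H}^1_{/f}(\mathbb{Q}_\ell,\mathrm{Sym}^2(T_pE))$ of the regulator class along the special fibre at $\ell$. Following Weston's computation (recalled in \S\ref{sec:explicit-reciprocity-law}), the degeneration of the cycle at $\ell$ is governed by the action of $\mathrm{Frob}_\ell$ through the Eichler--Shimura relation, so the residue is determined only after passing to the Frobenius-fixed line inside $\mathrm{Sym}^2(T_pE)(-1)$; isolating that line amounts to diagonalizing the local Frobenius and hence introduces the discriminant $\alpha_\ell-\beta_\ell$ of the Hecke polynomial $X^2-a_\ell(E)X+\ell$ at $\ell$, while the Hecke projector contributes the factor $\mathrm{deg}(\phi)$. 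Putting these together yields the asserted bound $\mathrm{ord}_p(\mathrm{loc}^s_\ell(c(\ell)))\leq\mathrm{ord}_p(\mathrm{deg}(\phi))+\mathrm{ord}_p(\alpha_\ell-\beta_\ell)$. This last step --- the explicit reciprocity computation at $\ell$ --- is the main obstacle; once Flach's cycle and its functorial properties are in hand, the other verifications are essentially formal.
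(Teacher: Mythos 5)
Your sketch is a fair high-level account of Flach's original construction---the cycle supported on the Hecke correspondence $T_\ell$ inside $X_0(N)\times X_0(N)$ made null-homologous, its \'etale regulator, and the projection to $\mathrm{Sym}^2(T_pE)$ via $\phi$---and this is exactly what the paper's proof consists of, namely a citation to Flach (Prop.~1 and \S5 of the annihilation paper). One small caveat on attribution: the inequality at $\ell$, stated here for every $\ell\nmid 2Np$, is Flach's own estimate; the computation recalled in \S\ref{sec:explicit-reciprocity-law} of this paper is Weston's sharpening to an \emph{equality} and is carried out there only for Flach primes (where $\mathrm{H}^1_{/f}(\mathbb{Q}_\ell,\cdot)$ is cyclic), so invoking it as the source of the general bound conflates the two contributions, and the phrase ``the Hecke projector contributes $\mathrm{deg}(\phi)$'' is a bit loose---the factor $\mathrm{deg}(\phi)$ really arises from comparing the cup-product pairing on $H^1(X_0(N))$ with the Weil pairing on $H^1(E)$ via $\phi_*$, i.e.\ from $\phi^*\phi_*=\mathrm{deg}(\phi)$, rather than from normalizing an idempotent.
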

\begin{proof}
See \cite[Prop. 1 and $\S$5]{flach-annihilation}.
\end{proof}
\begin{defn} \label{defn:flach-prime}
Let $k \geq 1$ be an integer.
A prime $\ell$ is said to be a \textbf{$k$-Flach prime} if $(\ell, Np)=1$, $\ell \equiv 1 \pmod{p^k}$, and $a_\ell(E) \not\equiv \pm 2 \pmod{p}$.
This condition is equivalent to that $(\ell, Np) = 1$ and the arithmetic Frobenius at $\ell$ acts on $E[p^k]$ as $\tau$ chosen in Choice \ref{choice:tau} (with fixed $\alpha$) below.
We call 1-Flach primes by Flach primes for convenience. 
Denote by $\mathcal{P}^{\mathrm{Flach},(\ell)}_k$ the set of all $k$-Flach primes except $\ell$ for fixed $(E,p)$
and by $\mathcal{N}^{\mathrm{Flach},(\ell)}_k$ the set of square-free products of primes in $\mathcal{P}^{\mathrm{Flach},(\ell)}_k$.
\end{defn}
It is easy to check that $\alpha_\ell - \beta_\ell$ is a $p$-adic unit for a Flach prime $\ell$.
From Theorem \ref{thm:flach-zeta-elements}, we have inequality
$$\mathrm{ord}_p(\mathrm{loc}^s_\ell(c(\ell)) ) \leq  \mathrm{ord}_p (\mathrm{deg}(\phi)) $$
for a Flach prime $\ell$. 
Here, $\mathrm{deg}(\phi)$ plays the role of the \emph{depth of a partial geometric Euler system} in the sense of Mazur and Weston \cite{mazur-hecke-curves,weston-flach, weston-algebraic-cycles}.

This inequality can be upgraded to equality thanks to Weston's computation \cite{weston-flach, weston-algebraic-cycles}.
The equality (\ref{eqn:explicit-reciprocity-law}) below should be viewed as the \emph{explicit reciprocity law for Flach's zeta elements} since  $\mathrm{deg}(\phi)$ has a precise connection with the adjoint $L$-value via the formula of Shimura and Hida.
\begin{prop}[Weston's explicit reciprocity law] \label{prop:explicit-reciprocity-law}
If $\ell$ is a Flach prime,  the inequality in Theorem \ref{thm:flach-zeta-elements} becomes equality
\begin{equation} \label{eqn:explicit-reciprocity-law}
\mathrm{ord}_p ( \mathrm{loc}^s_\ell c(\ell) ) = \mathrm{ord}_p ( \mathrm{deg}(\phi) )  .
\end{equation}
\end{prop}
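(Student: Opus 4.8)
The plan is to upgrade the inequality of Theorem \ref{thm:flach-zeta-elements} to an equality by exhibiting the reverse inequality $\mathrm{ord}_p(\mathrm{loc}^s_\ell c(\ell)) \geq \mathrm{ord}_p(\mathrm{deg}(\phi))$ for a Flach prime $\ell$, since for such $\ell$ the term $\mathrm{ord}_p(\alpha_\ell - \beta_\ell)$ vanishes (as noted right after Definition \ref{defn:flach-prime}) and the upper bound already reads $\mathrm{ord}_p(\mathrm{loc}^s_\ell c(\ell)) \leq \mathrm{ord}_p(\mathrm{deg}(\phi))$. The reverse inequality is exactly the content of Weston's refinement of Flach's construction: one tracks the explicit cohomology class $c(\ell)$ through its definition via the geometry of the modular curve (the Flach class comes from a higher Chow cycle / modular unit construction on $X_0(N) \times X_0(N)$, pushed into $\mathrm{H}^1(\mathbb{Q}, \mathrm{Sym}^2(T_pE))$), and computes its singular part at $\ell$ precisely, rather than merely bounding it. First I would invoke Weston's computation \cite{weston-flach, weston-algebraic-cycles} that the localization $\mathrm{loc}^s_\ell c(\ell)$ is, up to a $p$-adic unit, the image of $\mathrm{deg}(\phi)$ under the canonical identification $\mathrm{H}^1_{/f}(\mathbb{Q}_\ell, \mathrm{Sym}^2(T_pE)) \cong \mathbb{Z}_p/p^{m}\mathbb{Z}_p$ (for appropriate $m$ determined by $\ell \equiv 1 \bmod p$), which holds precisely because $a_\ell(E) \not\equiv \pm 2 \bmod p$ forces the eigenspace decomposition of $\mathrm{Sym}^2(E[p])$ under $\mathrm{Frob}_\ell$ to be $1$-dimensional in the relevant "singular" direction.

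The key steps, in order, would be: (1) recall the explicit geometric construction of $c(\ell)$ as in \cite[Prop. 1 and $\S5$]{flach-annihilation}, keeping track of the modular-degree factor that enters because the Flach class is built from a correspondence whose self-intersection number is governed by $\mathrm{deg}(\phi)$; (2) localize at $\ell$ and use the Flach prime hypothesis to split $\mathrm{Sym}^2(T_pE)$ as a $G_{\mathbb{Q}_\ell}$-module, identifying the unramified-quotient (singular) part with a rank-one piece on which $\mathrm{Frob}_\ell$ acts by a specified unit; (3) apply Weston's explicit evaluation \cite{weston-flach, weston-algebraic-cycles} of the image of the geometric class in this rank-one piece, which yields the exact $p$-adic valuation $\mathrm{ord}_p(\mathrm{deg}(\phi))$ and not merely an upper bound; (4) combine with the inequality of Theorem \ref{thm:flach-zeta-elements} to conclude equality. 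One should double-check that Weston's computation is stated in a form compatible with the normalization of $\mathrm{loc}^s_\ell$ used here (the $p$-divisibility index in $\mathrm{H}^1_{/f}(\mathbb{Q}_\ell, \mathrm{Sym}^2(T_pE))$), which is a matter of unwinding definitions rather than new mathematics.

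The main obstacle will be step (3): making sure that Weston's calculation genuinely produces an \emph{equality} of valuations and not a second inequality in the same direction as Flach's. The subtlety is that both the geometric construction and the explicit reciprocity computation naturally give bounds, so one needs Weston's sharper analysis — which exploits the precise structure of the congruence module and the fact that the Flach class is, up to units, the universal such class at $\ell$ — to pin down the valuation exactly. Concretely, this amounts to verifying that the only possible "loss" in Flach's inequality, namely the factor $\alpha_\ell - \beta_\ell$, is the \emph{sole} source of inexactness, so that once $\ell$ is a Flach prime and that factor becomes a unit, the bound is automatically attained. This is precisely what Weston established in \cite{weston-flach} (see also \cite{weston-algebraic-cycles}), so the proof reduces to correctly citing and transcribing that result into the present notation; I do not expect any new difficulty beyond bookkeeping once that citation is in place.
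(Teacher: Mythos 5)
Your approach matches the paper's: both reduce to Weston's exact computation in \cite{weston-flach} of the image of $c(\ell)$ in $\mathrm{H}^1_{/f}(\mathbb{Q}_\ell, \mathrm{Sym}^2(T_pE)) \simeq \mathrm{H}^0(\mathbb{F}_\ell, \mathrm{ad}^0(T_pE))$, which is $6 \cdot \mathrm{deg}(\phi) \cdot (\alpha_\ell - \beta_\ell)$ times a generator, so that for a Flach prime and $p \geq 5$ the unit factors $6$ and $\alpha_\ell - \beta_\ell$ drop out and the valuation is exactly $\mathrm{ord}_p(\mathrm{deg}(\phi))$. The only small point you gloss over is the explicit factor of $6$ in Weston's formula, whose invertibility is the reason the paper carries the hypothesis $p \geq 5$ at this step.
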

\begin{proof}
See $\S$\ref{sec:explicit-reciprocity-law}.
\end{proof}
The collection of $c(\ell)$ varying Flach primes $\ell$ forms a (cohesive) Flach system of depth $\mathrm{deg}(\phi)$ in the sense of Mazur and Weston \cite{mazur-hecke-curves, weston-algebraic-cycles}.

\section{Verifying the hypotheses for Stark systems} \label{sec:verifying-hypotheses}
We review the running hypotheses of Stark systems (of core rank one) in \cite[$\S$4]{mazur-rubin-control} and verify them for our setting.
\subsection{Running hypotheses}
Let $\Sigma = \lbrace p, \infty, \textrm{ramified primes for } T_p E \rbrace$ and fix a Flach prime $\ell$.
\begin{defn} \label{defn:selmer-data}
By \textbf{Selmer data}, we mean the triple 
$$(\mathrm{Sym}^2(T_p E), \mathcal{F}^{\ell}_{\mathrm{BK}}, \mathcal{P}^{\mathrm{Flach}, (\ell)})$$ where
\begin{itemize}
\item $\mathrm{Sym}^2(T_p E)$ as a $G_{\mathbb{Q}}$-module,
\item $\mathcal{F}^{\ell}_{\mathrm{BK}} $ is the Bloch--Kato Selmer structure except the relaxed local condition at $\ell$, and
\item $\mathcal{P}^{\mathrm{Flach}, (\ell)}$ is the set of Flach primes excluding $\ell$.
\end{itemize}
\end{defn}
For a $G_{\mathbb{Q}}$-module $A$, we write $\mathbb{Q}(A)$ for the fixed field in $\overline{\mathbb{Q}}$ of the kernel of the map 
$G_{\mathbb{Q}} \to \mathrm{Aut}(A)$.
\begin{assu}
We list the assumptions to proceed the Stark system argument \cite[$\S$4]{mazur-rubin-control}:
\begin{enumerate}
\item[(H.1)] $\mathrm{H}^0(\mathbb{Q}, \mathrm{Sym}^2(E[p])) = \mathrm{H}^0(\mathbb{Q}, \mathrm{ad}^0(E[p])) = 0$ and $\mathrm{Sym}^2(E[p])$ is absolutely irreducible.
\item[(H.2)] There exists an element $\tau \in G_{\mathbb{Q}(\zeta_{p^\infty})} = \mathrm{Gal}(\overline{\mathbb{Q}} / \mathbb{Q}(\zeta_{p^\infty}) )$ such that 
$\mathrm{Sym}^2(T_p E) / (\tau - 1) \mathrm{Sym}^2(T_p E)$ is free of rank one over $\mathbb{Z}_p$.
\item[(H.3)] $\mathrm{H}^1(\mathbb{Q}_{T}/\mathbb{Q}, \mathrm{Sym}^2(E[p])) = \mathrm{H}^1(\mathbb{Q}_{T}/\mathbb{Q}, \mathrm{ad}^0(E[p])) = 0$
where $\mathbb{Q}_{T} = \mathbb{Q}(\mathrm{Sym}^2(T_pE), \zeta_{p^\infty})$.
\item[(H.4)] $\mathrm{Sym}^2(E[p]) \not\simeq \mathrm{ad}^0(E[p])$ as Galois modules or $p >3$.
\item[(H.5)] The Selmer structure $\mathcal{F}^{\ell}_{\mathrm{BK}}$ is cartesian.
\item[(H.6)] $\chi(\mathrm{Sym}^2(T_p E), \mathcal{F}^{\ell}_{\mathrm{BK}}) = 1$, where $\chi(\mathrm{Sym}^2(T_p E), \mathcal{F}^{\ell}_{\mathrm{BK}})$ is the core rank of $(\mathrm{Sym}^2(T_p E), \mathcal{F}^{\ell}_{\mathrm{BK}})$.
\item[(H.7)] $I_q = 0$ for $q \in \mathcal{P}^{\mathrm{Flach}, (\ell)}_k$ when the coefficient ring is artinian, i.e. $\mathbb{Z}/p^k\mathbb{Z}$ in our case.
\end{enumerate}
\end{assu}
The following lemma is useful.
\begin{lem} \label{lem:surjectivity}
Let $E$ be a semi-stable elliptic curve over $\mathbb{Q}$, $p$ a prime, and $\overline{\rho} : G_{\mathbb{Q}} \to \mathrm{Aut}_{\mathbb{F}_p}(E[p])$.
Then $\overline{\rho}$ is surjective if and only if $\overline{\rho}$ is irreducible.
\end{lem}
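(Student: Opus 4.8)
The plan is to prove both directions of the equivalence for $\overline{\rho} : G_{\mathbb{Q}} \to \mathrm{Aut}_{\mathbb{F}_p}(E[p]) \cong \mathrm{GL}_2(\mathbb{F}_p)$ attached to a semi-stable elliptic curve $E/\mathbb{Q}$. The forward direction (surjective $\Rightarrow$ irreducible) is immediate, since $\mathrm{GL}_2(\mathbb{F}_p)$ acts irreducibly on $\mathbb{F}_p^2$: there is no nonzero proper $G_{\mathbb{Q}}$-stable subspace because already the full linear group moves every line. So the content is the converse: assuming the image $G := \overline{\rho}(G_{\mathbb{Q}}) \subseteq \mathrm{GL}_2(\mathbb{F}_p)$ acts irreducibly, I want to conclude $G = \mathrm{GL}_2(\mathbb{F}_p)$, crucially using semi-stability of $E$.

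First I would record the standard structural input: since $E$ is semi-stable, at every prime $q \mid N$ the curve has multiplicative reduction, so by the theory of the Tate curve the representation $\overline{\rho}|_{G_{\mathbb{Q}_q}}$ is (up to unramified twist) of the form $\begin{pmatrix} \chi_{\mathrm{cyc}} & * \\ 0 & 1 \end{pmatrix}$ with the restriction to inertia $I_q$ either trivial or unipotent non-trivial according to whether $p \mid \mathrm{ord}_q(j_E)$; in particular $\overline{\rho}$ is ramified at $q$ only if $p \mid \mathrm{ord}_q(j_E)$, and when ramified, $\overline{\rho}(I_q)$ is generated by a transvection (a unipotent element $\neq 1$). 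Also, $\overline{\rho}$ is ramified at $p$ (for instance because $\det\overline{\rho} = \chi_{\mathrm{cyc}}$ is ramified at $p$, since $p \geq 5$), and $\overline{\rho}|_{I_p}$ is described by Serre's theory — for good reduction at $p$ it is either given by a fundamental character of level $2$ (supersingular case, which already forces irreducibility of $\overline{\rho}|_{I_p}$ and shows the image is non-abelian) or, in the ordinary case, it is $\begin{pmatrix} \chi_{\mathrm{cyc}}^{} & * \\ 0 & 1 \end{pmatrix}$ on inertia.

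Next I would invoke the classification of irreducible subgroups of $\mathrm{GL}_2(\mathbb{F}_p)$ (equivalently, of subgroups of $\mathrm{PGL}_2(\mathbb{F}_p)$ acting irreducibly): such a subgroup $G$, after projecting to $\mathrm{PGL}_2$, either (i) is contained in the normalizer of a (split or non-split) Cartan but not in the Cartan itself — the dihedral/normalizer-of-torus case; (ii) has image one of the exceptional subgroups $A_4$, $S_4$, $A_5$ in $\mathrm{PGL}_2(\mathbb{F}_p)$; or (iii) contains $\mathrm{PSL}_2(\mathbb{F}_p)$. The strategy is to rule out (i) and (ii) using semi-stability, and then to upgrade (iii) to $G = \mathrm{GL}_2(\mathbb{F}_p)$ using $\det \overline{\rho} = \chi_{\mathrm{cyc}}$, which is surjective onto $\mathbb{F}_p^\times$. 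To rule out (i) and (ii): the key point is that in both cases the image $G$ has order prime to $p$ when $p \geq 5$ (the normalizer of a Cartan has order $2(p\pm 1)$ or divides it; $A_4, S_4, A_5$ have orders $12, 24, 60$, all prime to $p$ for $p \geq 7$, and $p = 5$ is handled separately since $A_5$ and $\mathrm{PSL}_2(\mathbb{F}_5)$ coincide). But if $G$ has order prime to $p$, then $\overline{\rho}$ is tamely ramified everywhere and $\overline{\rho}(I_q)$ contains no nontrivial unipotent element for any $q$; at $p$ this contradicts the description of $\overline{\rho}|_{I_p}$ in the ordinary case — one gets a nontrivial transvection from ramification at $p$ — and in the supersingular case one checks directly that a level-$2$ fundamental character has order divisible by... well, one argues the image is too large to be dihedral of the wrong type; alternatively one uses ramification at a prime $q \mid N$ where $\overline{\rho}(I_q)$ is a nontrivial transvection, which again has order $p$. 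Either way, $p \mid \#G$, eliminating (i) and (ii).

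The main obstacle I expect is making the "$p \mid \#\mathrm{image}$" argument airtight in every sub-case of the good-reduction hypothesis simultaneously — in particular cleanly handling the supersingular-at-$p$ case (where one should extract largeness of the image from the level-$2$ fundamental character rather than from a transvection) and dealing with the small prime $p = 5$ where the exceptional group $A_5 \cong \mathrm{PSL}_2(\mathbb{F}_5)$ blurs cases (ii) and (iii). A clean way around the whole issue is to appeal to the semi-stable case of results of Serre and Mazur: semi-stability forces the presence of a transvection in the image (from multiplicative reduction at some $q \mid N$, or from the ordinary/supersingular structure at $p$), and a classical lemma states that an irreducible subgroup of $\mathrm{GL}_2(\mathbb{F}_p)$ containing a transvection must contain $\mathrm{SL}_2(\mathbb{F}_p)$; combined with $\det = \chi_{\mathrm{cyc}}$ surjective, this gives $G = \mathrm{GL}_2(\mathbb{F}_p)$. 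I would therefore organize the proof around that transvection lemma, citing the relevant statements (e.g. from Serre's work on Galois representations attached to elliptic curves, or Diamond--Shurman / Ribet), and spend the bulk of the write-up justifying that semi-stability produces the transvection — the borderline case being a curve with good reduction at $p$ and $N = N(\overline{\rho})$ square-free where one must locate the ramification carefully.
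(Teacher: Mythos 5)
The paper itself gives no argument: it simply cites Edixhoven (Prop.\ 2.1) for this fact, which in turn is the classical Serre--Mazur result for semi-stable curves. So your job is to reconstruct Serre's original argument, and your outline does follow the right strategy (trivial forward direction; Dickson's classification of subgroups of $\mathrm{PGL}_2(\mathbb{F}_p)$; eliminate the Borel, normalizer-of-Cartan, and exceptional cases; upgrade $\mathrm{SL}_2 \subseteq G$ to $G = \mathrm{GL}_2$ via $\det = \chi_{\mathrm{cyc}}$).

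However, there is a genuine gap in the route you propose to \emph{organize} the proof around, namely ``semi-stability produces a transvection in the image, hence by the transvection lemma $G \supseteq \mathrm{SL}_2$.'' This premise is simply false in general. If $E$ has good supersingular reduction at $p$, the image of $I_p$ is a non-split Cartan, cyclic of order $p^2-1$, hence contains \emph{no} unipotent element; and at a prime $q \mid N$ of multiplicative reduction, $\overline{\rho}(I_q)$ is nontrivial only when $p \mid \mathrm{ord}_q(j_E)$, which need not hold at any $q$. So it is perfectly possible (as a hypothesis to be disproved) that $\overline{\rho}$ is tamely ramified everywhere and $p \nmid |G|$; in that situation there is no transvection anywhere in the image, and the transvection lemma cannot be invoked. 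The same issue arises for ordinary \emph{peu ramifi\'e} reduction, where $\overline{\rho}|_{I_p}$ is diagonal. You flag the supersingular case as needing a ``different argument,'' but you never give it, and the proof you say you would ``organize around'' the transvection lemma therefore does not close.

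What actually happens in Serre's argument (and in the Edixhoven reference the paper cites) is different: one shows \emph{directly} that if $|G|$ is prime to $p$ then $\overline{\rho}$ is unramified outside $p$ (semi-stability kills $I_q$ for $q \neq p$ when the image has no $p$-part), and then one rules out the dihedral case by analyzing the quadratic field from which $\overline{\rho}$ would be induced --- it would have to be imaginary and unramified outside $p$, hence $\mathbb{Q}(\sqrt{-p})$ with $p \equiv 3 \pmod 4$, and then the explicit shape of $\overline{\rho}|_{G_{\mathbb{Q}_p}}$ (ordinary or supersingular, in terms of the level-$1$ or level-$2$ fundamental characters) gives a contradiction --- and the exceptional cases $A_4, S_4, A_5$ by showing the image of $I_p$, cyclic of order $p-1$ or $p^2-1$, is already too large for $p \geq 5$. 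Until you supply that case analysis, what you have is the correct high-level roadmap but not a proof. You should also make explicit that the statement requires $p \geq 5$ (the paper's standing hypothesis): for $p = 2$ or $3$ there are semi-stable curves whose mod-$p$ image is a proper irreducible subgroup of $\mathrm{GL}_2(\mathbb{F}_p)$, and the classification-of-subgroups step you invoke genuinely degenerates there.
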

\begin{proof}
See \cite[Prop. 2.1]{edixhoven-serre-conjecture}.
\end{proof}
\subsection{Verifying (H.1)}
(H.1) follows from Lemma \ref{lem:surjectivity} and \cite[$\S$3.6]{rubin-book}.
\subsection{Verifying (H.2)}
Thanks to Lemma \ref{lem:surjectivity}, we are able to make the following choice of a Galois element. 
\begin{choice} \label{choice:tau}
We choose an element $\tau \in G_{\mathbb{Q}(\zeta_{p^\infty})}$ 
such that
$$\rho(\tau) \sim \begin{pmatrix}
\alpha & 0  \\
0 & \alpha^{-1}
\end{pmatrix}$$
where $\alpha \in \mathbb{Z}_{p}$ with $\alpha^2 \not\equiv 1 \pmod{p}$ when $p >3$, or
$\alpha \in \mathbb{Z}_{p^2}$ with $\alpha \equiv \sqrt{-1} \in \mathbb{F}_{3^2}$ when $p =3$.
\end{choice}
This choice of $\tau$ is possible and satisfies (H.2) for $\mathrm{Sym}^2(T_pE)$ as explained in \cite[$\S$3.6]{rubin-book}. 
The choice of $\tau$ determines the type of auxiliary primes when we apply the Chebotarev density theorem as in Definition \ref{defn:flach-prime}.
In the case of Heegner points, $\tau$ is chosen to be the complex conjugation.
In \cite{flach-thesis}, $\tau$ is chosen to be the complex conjugation again. 
For Kato's Euler systems, $\tau$ is chosen to be a unipotent element as explained in \cite[Prop. 3.5.8]{rubin-book}. 
We also recommend the reader to read \cite[Intro.]{wiles} for his change of auxiliary primes.
%
\subsection{Verifying (H.3)}
(H.3) follows from \cite[$\S$3.6]{rubin-book}.
See also \cite[Lem. 1.2]{flach-thesis} with \cite[Rem. 1 in $\S$4]{flach-annihilation}, and \cite[Lem. 2.48]{ddt} when $p > 5$.
\subsection{Verifying (H.4)}
This is immediate.
\subsection{Verifying (H.5)}
(H.5) follows from \cite[Lem. 3.7.1 and Rem. 3.7.2]{mazur-rubin-book}.

\subsection{Verifying (H.6)} \label{subsubsec:verifying_h_6}
As explained in \cite[Rem. 6.3.4]{mazur-rubin-book}, the core rank of Kolyvagin systems for 
$\left( \mathrm{Sym}^2(T_pE), \mathcal{F}^{\ell}_{\mathrm{BK}}, \mathcal{P}^{\mathrm{Flach},(\ell)} \right)$
 is one since $\ell$ is a Flach prime.
The core rank of Stark systems is defined in the exactly same way.
See also \cite[Thm. 4.1.13]{mazur-rubin-book} and \cite[Prop. 3.3]{mazur-rubin-control}.

\subsection{Verifying (H.7)}
Suppose that we are working over the $\mathbb{Z}/p^k\mathbb{Z}$-coefficients.
For $q \in \mathcal{P}^{\mathrm{Flach}, (\ell)}_k$,
$I_q$ is defined by the ideal of $\mathbb{Z}/p^k\mathbb{Z}$ generated by
$(1 - \alpha^2_q) \cdot(1 - q) \cdot (1 - \beta^2_q)$ and $(1-q)$.
This means that $I_q = (1-q)= (0)$, so we are done.

\section{A proof of the explicit reciprocity law} \label{sec:explicit-reciprocity-law}
We recall the computation done by Weston in \cite{weston-flach} but with a very slight modification.
Proposition \ref{prop:explicit-reciprocity-law} follows from this computation.
\subsection{A lemma}
\begin{lem} \label{lem:cartesian}
Suppose that $q$ is prime to $Np$.
Then 
$\mathrm{H}^1_{/f} (\mathbb{Q}_q , \mathrm{Sym}^2(T_pE)) \simeq \mathrm{H}^0(\mathbb{F}_q,  \mathrm{ad}^0(T_pE))$.
If $q$ is a $k$-Flach prime, then we have isomorphism
$$\psi_q:  \mathrm{H}^1_{/f} (\mathbb{Q}_q ,  \mathrm{Sym}^2(E[p^k])) \simeq \mathbb{Z}/p^k\mathbb{Z} .$$
\end{lem}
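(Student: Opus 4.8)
The plan is to use that $\mathrm{Sym}^2(T_pE)$ is unramified at $q$ (since $q\nmid Np$) together with the twisting identity $\mathrm{ad}^0(T_pE)\cong\mathrm{Sym}^2(T_pE)\otimes\mathbb{Z}_p(-1)$, which holds because $p$ is odd: the Weil pairing identifies $\mathrm{ad}(T_pE)=T_pE\otimes\mathrm{Hom}_{\mathbb{Z}_p}(T_pE,\mathbb{Z}_p)$ with $(\mathrm{Sym}^2\oplus\wedge^2)(T_pE)\otimes\mathbb{Z}_p(-1)$, and the trace-zero part is the $\mathrm{Sym}^2$ summand (with $\wedge^2 T_pE\cong\mathbb{Z}_p(1)$). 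The same computation mod $p^k$ gives $\mathrm{ad}^0(E[p^k])\cong\mathrm{Sym}^2(E[p^k])\otimes\mathbb{Z}/p^k\mathbb{Z}(-1)$.

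First I would record the shape of the singular quotient for an arbitrary unramified $G_{\mathbb{Q}_q}$-module $M$ that is finite free over $\mathbb{Z}_p$ or over $\mathbb{Z}/p^k\mathbb{Z}$. Since $G_{\mathbb{F}_q}\cong\widehat{\mathbb{Z}}$ has cohomological dimension one, $\mathrm{H}^2(G_{\mathbb{F}_q},-)=0$, so inflation--restriction along $I_q\triangleleft G_{\mathbb{Q}_q}$ collapses to a short exact sequence $0\to\mathrm{H}^1(\mathbb{F}_q,M)\to\mathrm{H}^1(\mathbb{Q}_q,M)\to\mathrm{H}^1(I_q,M)^{G_{\mathbb{F}_q}}\to 0$, whose first term is exactly the unramified (hence Bloch--Kato finite) subgroup $\mathrm{H}^1_f(\mathbb{Q}_q,M)$ for $q\neq p$. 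Thus $\mathrm{H}^1_{/f}(\mathbb{Q}_q,M)\cong\mathrm{H}^1(I_q,M)^{G_{\mathbb{F}_q}}$, and as only the pro-$p$ tame quotient of $I_q$ contributes, $\mathrm{H}^1(I_q,M)=\mathrm{Hom}(\mathbb{Z}_p(1),M)=M\otimes\mathbb{Z}_p(-1)$; taking $G_{\mathbb{F}_q}$-invariants gives $\mathrm{H}^1_{/f}(\mathbb{Q}_q,M)\cong\mathrm{H}^0(\mathbb{F}_q, M\otimes\mathbb{Z}_p(-1))$ (equivalently, this follows from local Tate duality together with the self-duality of $\mathrm{ad}^0$ under the trace form). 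Feeding in $M=\mathrm{Sym}^2(T_pE)$ and the twisting identity yields the first assertion, and $M=\mathrm{Sym}^2(E[p^k])$ yields $\mathrm{H}^1_{/f}(\mathbb{Q}_q,\mathrm{Sym}^2(E[p^k]))\cong\mathrm{H}^0(\mathbb{F}_q,\mathrm{ad}^0(E[p^k]))$.

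It then remains to evaluate $\mathrm{H}^0(\mathbb{F}_q,\mathrm{ad}^0(E[p^k]))$ when $q$ is a $k$-Flach prime. By Definition \ref{defn:flach-prime} and Choice \ref{choice:tau}, the arithmetic Frobenius at $q$ acts on $E[p^k]$, in a suitable $\mathbb{Z}/p^k\mathbb{Z}$-basis, as $\mathrm{diag}(\alpha,\alpha^{-1})$ with $\alpha\in(\mathbb{Z}/p^k\mathbb{Z})^\times$ and $\alpha^2\not\equiv 1\pmod p$; the existence of $\alpha$ and of the diagonalization over $\mathbb{Z}/p^k\mathbb{Z}$ is a Hensel's lemma argument using $a_q(E)^2-4\not\equiv 0\pmod p$. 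On the induced basis of $\mathrm{ad}^0(E[p^k])$ the Frobenius acts by $1$ on the trace-zero diagonal line and by $\alpha^{\pm 2}$ on the two off-diagonal lines; since $\alpha^{\pm 2}-1$ is a $p$-adic unit, $\mathrm{Frob}_q-1$ is invertible on those lines, so $\mathrm{H}^0(\mathbb{F}_q,\mathrm{ad}^0(E[p^k]))$ is precisely the diagonal line, which is free of rank one over $\mathbb{Z}/p^k\mathbb{Z}$; choosing a generator fixes $\psi_q$.

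I expect the only genuinely delicate point to be the torsion-coefficient bookkeeping: verifying at each finite level $k$ that $\mathrm{H}^1_f(\mathbb{Q}_q,\mathrm{Sym}^2(E[p^k]))$ really is the unramified subgroup, so that the description of $\mathrm{H}^1_{/f}$ is valid, and that the isomorphisms $\psi_q$ are compatible with the reduction maps $E[p^{k+1}]\twoheadrightarrow E[p^k]$. This compatibility is the reason the lemma is called ``cartesian'' and it dovetails with hypothesis (H.5); it is exactly what makes $\psi_q$ usable in the reciprocity-law computation of \S\ref{sec:explicit-reciprocity-law}, and I would import it from Weston \cite{weston-flach} and \cite[\S3.6]{rubin-book} rather than reprove it here.
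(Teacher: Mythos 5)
Your proof is correct and follows essentially the same route as the paper: identify the singular quotient of an unramified module with $\mathrm{H}^0(\mathbb{F}_q, M(-1))$, use $\mathrm{Sym}^2(E[p^k])(-1)\cong\mathrm{ad}^0(E[p^k])$, then diagonalize Frobenius and read off the rank-one fixed line; you merely spell out the inflation--restriction step that the paper delegates to Weston, and you act directly on $\mathrm{ad}^0$ where the paper acts on $\mathrm{Sym}^2$ and then cancels the $q^{-1}$ twist using $q\equiv 1\pmod{p^k}$. One small imprecision: the diagonalization over $\mathbb{Z}/p^k\mathbb{Z}$ is not solely a Hensel's lemma consequence of $a_q^2-4\not\equiv 0\pmod p$, which only gives distinct eigenvalues possibly living in $\mathbb{F}_{p^2}\setminus\mathbb{F}_p$; what you really need, and what the definition of $k$-Flach prime packages, is that $\mathrm{Fr}_q$ is conjugate to the explicit $\tau$ of Choice \ref{choice:tau}, which was fixed with $\alpha\in\mathbb{Z}_p$ (for $p>3$), so the rationality of the eigenvalues comes from that choice rather than from the discriminant condition alone.
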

\begin{proof}
See \cite[Lem.  10.9]{weston-flach} for the first statement and the mod $p^k$ version also holds by the same reasoning. Since \cite[Lem.  10.9]{weston-flach} concerns the complex conjugation, we give the detail for the latter. 
Since $q$ is a $k$-Flach prime, the arithmetic Frobenius $\mathrm{Fr}_q$ at $q$ acts on $E[p^k]$ by $\tau$ in Choice \ref{choice:tau}.
Choose a basis $x$, $y$ of $E[p^k]$ such that
$\mathrm{Fr}_q(x) = \alpha \cdot x$ and
$\mathrm{Fr}_q(y) = \alpha^{-1} \cdot y$.
Then $x \otimes x $, $x\otimes y +  y\otimes x$, $y \otimes y$ forms a basis of $\mathrm{Sym}^2(E[p^k])$.
Since $\mathrm{Sym}^2(E[p^k])(-1) = \mathrm{ad}^0(E[p^k])$,
$\mathrm{Fr}_q$ acts on the induced basis of $\mathrm{ad}^0(E[p^k])$ by multiplication by $\alpha^2 q^{-1}$, $q^{-1}$, and $\alpha^{-2} q^{-1}$, respectively.
Since $q \equiv 1 \pmod{p^k}$ and $\alpha^2 \not\equiv 1 \pmod{p}$,
$ \mathrm{H}^1_{/f} (\mathbb{Q}_q ,  \mathrm{Sym}^2(E[p^k])) \simeq \mathrm{H}^0(\mathbb{F}_q,  \mathrm{ad}^0(E[p^k]))$ is free of rank one over $\mathbb{Z}/p^k\mathbb{Z}$, so we are done.
\end{proof}

\subsection{The computation of the image}
We recap Weston's computation of the image of $c(\ell)$ under $\mathrm{loc}^s_\ell$ but with Flach prime $\ell$. 
This computation slightly refines \cite[Lem. 2.5]{flach-annihilation}, and see \cite[Thm. 3.1.1]{weston-algebraic-cycles} for a more theoretical background of this computation.

As in Lemma \ref{lem:cartesian}, we choose a basis $x$, $y$ of $T_pE$ with respect to which $\mathrm{Fr}_\ell$ has matrix
$$ \begin{pmatrix}
\alpha & 0  \\
0 & \alpha^{-1} \cdot \ell
\end{pmatrix}$$
where $\alpha \in \mathbb{Z}_{p}$ with $\alpha^2 \not\equiv 1  \pmod{p}$.
By using the same idea of Lemma \ref{lem:cartesian} again,  we have isomorphism
\begin{align*}
\mathrm{H}^1_{/f}(\mathbb{Q}_\ell, \mathrm{Sym}^2(T_pE)) & \simeq \mathrm{H}^0(\mathbb{F}_\ell,  \mathrm{ad}^0(T_pE)) \\
& =  \mathbb{Z}_p \cdot
\begin{pmatrix}
1 & 0  \\
0 & -1
\end{pmatrix} .
\end{align*}
Following Weston's computation in \cite[p. 369]{weston-flach},  the image of $\mathrm{loc}^s_{\ell} c(\ell)$ in  $\mathrm{H}^0(\mathbb{F}_\ell,  \mathrm{ad}^0(T_pE))$
is
$$ 6 \cdot \mathrm{deg}(\phi) \cdot ( \alpha_\ell - \beta_\ell ) \cdot
\begin{pmatrix}
1 & 0  \\
0 & -1
\end{pmatrix}.$$
Since $p \geq 5$ and $\ell$ is a Flach prime,   we have an isomorphism of cyclic modules 
$$ \dfrac{ \mathrm{H}^1_{/f}(\mathbb{Q}_\ell, \mathrm{Sym}^2(T_pE)) }{  \mathbb{Z}_p \cdot \mathrm{loc}^s_{\ell} c(\ell) }  \simeq \mathbb{Z}_p / \mathrm{deg}(\phi) \mathbb{Z}_p ,$$
which implies the explicit reciprocity law (\ref{eqn:explicit-reciprocity-law}).  
\section{The construction of Flach--Stark systems} \label{sec:construction}
We freely use the language in \cite{mazur-rubin-book,mazur-rubin-control} in order to keep the argument concise.

\subsection{Basic setup for Stark systems} \label{subsec:choose_n}
Due to Theorem \ref{thm:flach-annihilation} and \cite[Lem. 3.5.3]{mazur-rubin-book}, we are able to and do fix an integer $k \gg 0$ such that
$$\mathrm{Sel}_{ \mathcal{F}_{\mathrm{BK}} }( \mathbb{Q}, \mathrm{ad}^0(E[p^k])) = \mathrm{Sel}_{ \mathcal{F}_{\mathrm{BK}} }( \mathbb{Q}, \mathrm{ad}^0(E[p^\infty])) $$
where $\mathcal{F}_{\mathrm{BK}}$ is the Bloch--Kato Selmer structure.

Fix a Flach prime $\ell$.
We choose
\begin{equation} \label{eqn:choose_n}
n = q_1 \cdot q_2 \cdot \cdots \cdot q_s \in \mathcal{N}^{\mathrm{Flach},(\ell)}_k
\end{equation}
such that
$\mathrm{deg}(\phi)^{\nu(n)} = \mathrm{deg}(\phi)^s \ll k$ throughout this section.
\begin{rem} \label{rem:choose_n}
\begin{enumerate}
\item The last condition on $k$ means that we first need to choose large $k$ in order to work with $n \in \mathcal{N}^{\mathrm{Flach},(\ell)}_k$ with large $\nu(n)$. This assumption ensures that the result of the key computation in \S\ref{subsec:key-computation} is non-vacuous\footnote{We deeply thank an anonymous referee for pointing out this condition.}. See Remark \ref{rem:non-vanishing}.
\item In the case of Kolyvagin systems of Gauss sums \cite[Rem. 6.3.3]{mazur-rubin-book}, the choice of $n$ depends seriously on $\ell$. 
Namely, every prime divisor of $n$ should divide $\ell -1$.
In our case, there are no such restrictions, so each $q_i$ is just a $k$-Flach prime not equal to $\ell$.
\end{enumerate}
\end{rem}
Following \cite[$\S$6]{mazur-rubin-control}, recall that
\begin{align*}
W_n & = \bigoplus^s_{i=1} \mathrm{Hom} \left( \mathrm{H}^1_{/f}( \mathbb{Q}_{q_i}, \mathrm{Sym}^2(E[p^k]) ), \mathbb{Z}/p^k\mathbb{Z} \right), \\
Y_n & = \bigwedge^{1+\nu(n)} \mathrm{Sel}_{\mathcal{F}^{\ell n}_{\mathrm{BK}}} ( \mathbb{Q}, \mathrm{Sym}^2(E[p^k]) )\otimes \bigwedge^{\nu(n)} W_n 
\end{align*}
where $\nu(n) =s$ is the number of prime divisors of $n$, and $\mathcal{F}^{\ell n}_{\mathrm{BK}}$ is the $\ell n$-relaxed Bloch--Kato Selmer structure.
For $m$ dividing $n$, we have the cartesian square
\[
\xymatrix{
\mathrm{Sel}_{\mathcal{F}^{\ell m}_{\mathrm{BK}}} ( \mathbb{Q}, \mathrm{Sym}^2(E[p^k]) ) \ar@{^{(}->}[r] \ar[d]^-{\oplus_{q \vert m} \mathrm{loc}^s_q} & \mathrm{Sel}_{\mathcal{F}^{\ell n}_{\mathrm{BK}}} ( \mathbb{Q}, \mathrm{Sym}^2(E[p^k]) ) \ar[d]^-{\oplus_{q \vert n} \mathrm{loc}^s_q} \\
\bigoplus_{q \vert m} \mathrm{H}^1_{/f}(\mathbb{Q}_q,\mathrm{Sym}^2(E[p^k]) )  \ar@{^{(}->}[r] &
\bigoplus_{q \vert n} \mathrm{H}^1_{/f}(\mathbb{Q}_q,\mathrm{Sym}^2(E[p^k]) ) 
}
\]
and the canonical map $\Psi_{n,m} : Y_n \to Y_m$ attached to the above square as in \cite[Prop. A.2]{mazur-rubin-control}.
For a $k$-Flach prime $q$, we have an isomorphism
$$\mathrm{H}^1_{/f} (\mathbb{Q}_q ,  \mathrm{Sym}^2(E[p^k])) \simeq \mathrm{H}^1_{\mathrm{tr}} (\mathbb{Q}_q ,  \mathrm{Sym}^2(E[p^k]))$$
by \cite[Lem. 1.2.4]{mazur-rubin-book} where $\mathrm{H}^1_{\mathrm{tr}}$ means the transverse local condition.
\subsection{Definition and properties of Stark systems}
The canonical map $\Psi_{n,m} $ given above has the following compatibility.
\begin{prop} \label{prop:compatibility}
Suppose $n_0 \in \mathcal{N}^{\mathrm{Flach},(\ell)}_k$, $n_1$ divides $n_0$, and $n_2$ divides $n_1$.
Then
$$\Psi_{n_0,n_2} = \Psi_{n_1,n_2} \circ \Psi_{n_0,n_1} .$$
\end{prop}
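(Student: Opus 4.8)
The plan is to reduce the claimed equality $\Psi_{n_0,n_2} = \Psi_{n_1,n_2}\circ\Psi_{n_0,n_1}$ to the transitivity of the exterior-algebra maps $\Psi_{n,m}$ attached to nested cartesian squares, as formulated in \cite[Prop.~A.2]{mazur-rubin-control}. First I would recall precisely how $\Psi_{n,m}$ is built: given $m \mid n$ in $\mathcal{N}^{\mathrm{Flach},(\ell)}_k$, one has the cartesian square displayed above, and $\Psi_{n,m}$ is the induced map
$$\bigwedge^{1+\nu(n)}\mathrm{Sel}_{\mathcal{F}^{\ell n}_{\mathrm{BK}}}\otimes\bigwedge^{\nu(n)}W_n \longrightarrow \bigwedge^{1+\nu(m)}\mathrm{Sel}_{\mathcal{F}^{\ell m}_{\mathrm{BK}}}\otimes\bigwedge^{\nu(m)}W_m,$$
obtained by contracting against the localization maps $\mathrm{loc}^s_q$ for $q \mid n/m$ and using the identifications $W_n = W_m \oplus \bigoplus_{q\mid n/m}\mathrm{Hom}(\mathrm{H}^1_{/f}(\mathbb{Q}_q,\mathrm{Sym}^2(E[p^k])),\mathbb{Z}/p^k\mathbb{Z})$. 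The key structural fact is that the square for $n_2 \mid n_0$ factors as the vertical composite of the square for $n_2 \mid n_1$ sitting on top of the square for $n_1 \mid n_0$ (more precisely, the bottom-left of the first glues to the top-left of the second along $\mathrm{Sel}_{\mathcal{F}^{\ell n_1}_{\mathrm{BK}}}$, and similarly on the cohomological side), so that the whole configuration is a single commutative diagram of cartesian squares.

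The main step is then to invoke the functoriality/transitivity statement for the exterior-power maps in \cite[Prop.~A.2]{mazur-rubin-control} (or its proof): for a tower of cartesian squares, the associated contraction maps compose. Concretely, I would verify that under the canonical decomposition $W_{n_0} \cong W_{n_1}\oplus\bigoplus_{q\mid n_0/n_1} W_{\{q\}}$ and $W_{n_1}\cong W_{n_2}\oplus\bigoplus_{q\mid n_1/n_2} W_{\{q\}}$, the composite $\Psi_{n_1,n_2}\circ\Psi_{n_0,n_1}$ contracts a generator of $\bigwedge^{\nu(n_0)}W_{n_0}$ against exactly the localizations at the primes dividing $n_0/n_2$, in the order prescribed by first peeling off $n_0/n_1$ and then $n_1/n_2$, whereas $\Psi_{n_0,n_2}$ contracts against all primes dividing $n_0/n_2$ at once. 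These agree because contraction of a decomposable element of an exterior algebra against a wedge of functionals is associative up to the sign bookkeeping built into the Koszul conventions, and the cartesianness of the intermediate square guarantees that the image actually lands in $\mathrm{Sel}_{\mathcal{F}^{\ell n_1}_{\mathrm{BK}}}$ so that the second contraction is defined on the nose (no correction terms). I would also check the compatibility of the chosen generators/orientations of the $\bigwedge^{\nu}W$ factors across the three levels, since $\Psi$ depends on these choices and transitivity only holds once they are fixed coherently — this is a routine but necessary normalization.

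The part I expect to require the most care is the sign/orientation bookkeeping: the maps $\Psi_{n,m}$ involve wedge products of the $\mathrm{loc}^s_q$ taken in a fixed ordering of the prime divisors, and composing two such maps reorders the factors, so one must confirm that the signs introduced by $\Psi_{n_1,n_2}\circ\Psi_{n_0,n_1}$ match those of $\Psi_{n_0,n_2}$. Provided the conventions of \cite[Appendix A]{mazur-rubin-control} are used consistently (ordering the primes of $n_0$ so that those dividing $n_1$ come first, and within those the ones dividing $n_2$ first), this works out, and the proposition follows formally from \cite[Prop.~A.2]{mazur-rubin-control}. No new input beyond the cartesian squares verified above (which rely on (H.5) and Lemma \ref{lem:cartesian}) is needed.
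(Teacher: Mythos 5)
Your proposal is correct and in substance matches the paper, which simply cites \cite[Prop.~6.4]{mazur-rubin-control} for this transitivity; you instead trace the argument back to \cite[Prop.~A.2]{mazur-rubin-control} and unwind the gluing of cartesian squares and the sign bookkeeping, which is exactly what the Mazur--Rubin proof of their Prop.~6.4 does. Nothing is missing, though for the paper's purposes the one-line citation suffices since the statement is verbatim a special case of the cited result.
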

\begin{proof}
See \cite[Prop. 6.4]{mazur-rubin-control}.
\end{proof}
We briefly recall the definition of Stark systems \cite[Def. 6.5]{mazur-rubin-control} and their important properties.
\begin{defn} \label{defn:stark-systems}
The \textbf{$\mathbb{Z}/p^k\mathbb{Z}$-module of the (rank one) Stark system} 
$\mathbf{SS}_1(\mathrm{Sym}^2(E[p^k])) = \mathbf{SS}_1(\mathrm{Sym}^2(E[p^k]), \mathcal{F}^{\ell}_{\mathrm{BK}}, \mathcal{P}^{\mathrm{Flach}, (\ell)}_k)$
for Selmer data $(\mathrm{Sym}^2(E[p^k]), \mathcal{F}^{\ell}_{\mathrm{BK}}, \mathcal{P}^{\mathrm{Flach}, (\ell)}_k)$
is defined to be the inverse limit
$$\mathbf{SS}_1(\mathrm{Sym}^2(E[p^k])) = 
\varprojlim_{n_0 \in \mathcal{N}^{\mathrm{Flach}, (\ell)}_k} Y_{n_0}$$
with respect to the maps $\Psi_{n_0,n_1}$ with $n_1 \vert n_0$.
An element $\stark^{\ell} = \varprojlim_{n_0 \in \mathcal{N}^{\mathrm{Flach}, (\ell)}_k} \epsilon^{\ell}_{n_0}$ in $\mathbf{SS}_1(\mathrm{Sym}^2(E[p^k]))$ is called a \textbf{Stark system}.
\end{defn}
\begin{thm} \label{thm:core-rank-one}
Under our working hypotheses, we have
$\mathbf{SS}_1(\mathrm{Sym}^2(E[p^k])) \simeq \mathbb{Z}/p^k \mathbb{Z}$.
\end{thm}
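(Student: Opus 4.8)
The plan is to deduce this directly from the structure theorem for rank one Stark systems of Mazur--Rubin \cite{mazur-rubin-control}, all of whose running hypotheses were arranged to hold in \S\ref{sec:verifying-hypotheses}. Concretely, (H.1)--(H.5) and (H.7) place us inside the axiomatic framework of \cite[\S4, \S6]{mazur-rubin-control} for the Selmer data $(\mathrm{Sym}^2(E[p^k]), \mathcal{F}^{\ell}_{\mathrm{BK}}, \mathcal{P}^{\mathrm{Flach}, (\ell)}_k)$ over the principal Artinian local ring $\mathbb{Z}/p^k\mathbb{Z}$, and (H.6) says the core rank $\chi(\mathrm{Sym}^2(T_pE), \mathcal{F}^{\ell}_{\mathrm{BK}})$ of this data equals $1$. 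The structure theorem of \emph{loc.\ cit.} then asserts precisely that the module of Stark systems of core rank $1$ is free of rank one over the coefficient ring, i.e. $\mathbf{SS}_1(\mathrm{Sym}^2(E[p^k])) \simeq \mathbb{Z}/p^k\mathbb{Z}$. So at the formal level the proof is a citation; what follows sketches why the cited statement holds.

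The mechanism is the analysis of \emph{core vertices}: those $n \in \mathcal{N}^{\mathrm{Flach},(\ell)}_k$ for which $\mathrm{Sel}_{\mathcal{F}^{\ell n}_{\mathrm{BK}}}(\mathbb{Q}, \mathrm{Sym}^2(E[p^k]))$ is free over $\mathbb{Z}/p^k\mathbb{Z}$ of the minimal possible rank $1 + \nu(n)$ (equivalently, the $\ell n$-strict dual adjoint Selmer group vanishes). At such an $n$ the two tensor factors of $Y_n$ are top exterior powers of free modules of ranks $1+\nu(n)$ and $\nu(n)$ respectively, so $Y_n \simeq \mathbb{Z}/p^k\mathbb{Z}$ is free of rank one; at a non-core $n$ the module $Y_n$ is still cyclic but possibly of smaller length. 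One then shows, using the Chebotarev density theorem applied to $\mathbb{Q}_T = \mathbb{Q}(\mathrm{Sym}^2(T_pE), \zeta_{p^\infty})$ together with (H.2), (H.3) and the Greenberg--Wiles formula controlling how Selmer ranks change when the local condition at a Flach prime is relaxed or made transverse, that core vertices are plentiful: from any $n$ one can adjoin finitely many Flach primes to reach a core vertex, and any two core vertices are joined by a chain of core vertices each obtained from its predecessor by adjoining or removing a single Flach prime.

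The final step is to check that along such a chain the transition maps $\Psi_{n,m}$ between neighbouring core vertices are isomorphisms of free rank one $\mathbb{Z}/p^k\mathbb{Z}$-modules, and hence that restriction $\mathbf{SS}_1(\mathrm{Sym}^2(E[p^k])) = \varprojlim_{n} Y_n \to Y_{n_0}$ to a single core vertex $n_0$ is an isomorphism: it is injective because the navigation between core vertices shows a Stark system is determined by its value at $n_0$, and surjective because any element of $Y_{n_0}$ propagates to a compatible family over the whole poset. Since $Y_{n_0} \simeq \mathbb{Z}/p^k\mathbb{Z}$, this gives the claim. The main obstacle is exactly this navigation among core vertices---producing enough of them via Chebotarev and controlling the change-of-local-condition maps via Poitou--Tate/Greenberg--Wiles duality---but this is carried out in full generality in \cite[\S4--\S6]{mazur-rubin-control} under hypotheses that we have already verified, so for us the theorem reduces to the checks of \S\ref{sec:verifying-hypotheses}.
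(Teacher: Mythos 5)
Your proposal matches the paper's proof, which is simply the citation to \cite[Thm.\ 6.7]{mazur-rubin-control} once the hypotheses (H.1)--(H.7) have been verified in \S\ref{sec:verifying-hypotheses}; the extra material you provide is an accurate sketch of what goes into that cited theorem (core vertices, Chebotarev navigation, Poitou--Tate/Greenberg--Wiles control of the transition maps $\Psi_{n,m}$) rather than a different route. So the two proofs agree, and no gap is present.
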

\begin{proof}
See \cite[Thm. 6.7]{mazur-rubin-control}.
\end{proof}
A Stark system \textbf{$\stark^{\ell}$ is primitive} if the image of $\stark^{\ell}$ in $\mathbf{SS}_1(\mathrm{Sym}^2(E[p]))$ is non-zero \cite[Prop. 7.3 and Thm. 7.4]{mazur-rubin-control}.
The theory of Stark systems yields the following result on the exact size of the dual Selmer groups.
\begin{thm} \label{thm:structure}
We keep our working hypotheses, and let $\stark^{\ell}$ be a Stark system. 
\begin{enumerate}
\item 
If $\stark^{\ell}$ is non-trivial, then
$$\mathrm{length}_{\mathbb{Z}_p} \mathrm{Sel}_{\ell\textrm{-}\mathrm{str}}(\mathbb{Q}, \mathrm{ad}^0(E[p^k])) = \mathrm{ord} (\epsilon^{\ell}_1) - 
\partial\varphi_{\stark^{\ell}}(\infty)$$
where $\partial\varphi_{\stark^{\ell}}(\infty)$ is the minimal valuation of the whole Stark system $\stark^{\ell}$.
\item 
If $\stark^{\ell}$ is primitive, then 
$$\mathrm{length}_{\mathbb{Z}_p} \mathrm{Sel}_{\ell\textrm{-}\mathrm{str}}(\mathbb{Q}, \mathrm{ad}^0(E[p^k])) = \mathrm{ord} (\epsilon^{\ell}_1).$$
\item 
$\stark^{\ell}$ is primitive if and only if $\partial\varphi_{\stark^{\ell}}(\infty) = 0$.
\end{enumerate}\end{thm}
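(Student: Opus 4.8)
The plan is to obtain Theorem~\ref{thm:structure} as a direct consequence of the general structure theorem for Stark systems of core rank one in \cite{mazur-rubin-control}. All of the running hypotheses needed there have been verified in \S\ref{sec:verifying-hypotheses} for the Selmer data $(\mathrm{Sym}^2(E[p^k]), \mathcal{F}^{\ell}_{\mathrm{BK}}, \mathcal{P}^{\mathrm{Flach},(\ell)}_k)$; in particular the core rank is one by \S\ref{subsubsec:verifying_h_6}, so Theorem~\ref{thm:core-rank-one} and the machinery of loc.\ cit.\ apply. What remains is essentially bookkeeping, once the relevant dual Selmer group has been identified correctly.

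The first step is to record the identification of the dual Selmer group attached to our datum with the $\ell$-strict adjoint Bloch--Kato Selmer group, namely
$$\mathrm{Sel}_{(\mathcal{F}^{\ell}_{\mathrm{BK}})^*}(\mathbb{Q}, \mathrm{Sym}^2(E[p^k])^*) \simeq \mathrm{Sel}_{\ell\textrm{-}\mathrm{str}}(\mathbb{Q}, \mathrm{ad}^0(E[p^k])).$$
This rests on the twisted self-duality $\mathrm{Sym}^2(E[p^k])^* \simeq \mathrm{ad}^0(E[p^k])$ (using $\mathrm{Sym}^2(E[p^k])(-1) = \mathrm{ad}^0(E[p^k])$, already invoked in the proof of Lemma~\ref{lem:cartesian}, together with the self-duality of $\mathrm{ad}^0$ under the trace pairing), combined with the standard facts that the Bloch--Kato local conditions at $p$ and at the bad primes are their own orthogonal complements under local Tate duality, while the relaxed condition at $\ell$ is orthogonal to the strict condition at $\ell$ (see \cite[\S1.3]{mazur-rubin-book}). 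One should also check that this identification is compatible with the pairings defining the transition maps $\Psi_{n,m}$.

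Next, since $\nu(1) = 0$ we have $Y_1 = \mathrm{Sel}_{\mathcal{F}^{\ell}_{\mathrm{BK}}}(\mathbb{Q}, \mathrm{Sym}^2(E[p^k]))$ and $\epsilon^{\ell}_1 \in Y_1$. Writing $\varphi_{\stark^{\ell}}(j) = \min\{ \mathrm{ord}(\epsilon^{\ell}_n) : n \in \mathcal{N}^{\mathrm{Flach},(\ell)}_k,\ \nu(n) = j \}$ (with $\mathrm{ord}(-)$ in the sense of \cite{mazur-rubin-control}), this is a non-increasing sequence of non-negative integers with $\varphi_{\stark^{\ell}}(0) = \mathrm{ord}(\epsilon^{\ell}_1)$, stabilizing to $\partial\varphi_{\stark^{\ell}}(\infty) = \min_n \mathrm{ord}(\epsilon^{\ell}_n)$, the minimal valuation of the whole system. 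The structure theorem of \cite{mazur-rubin-control} then identifies the successive differences $\varphi_{\stark^{\ell}}(j-1) - \varphi_{\stark^{\ell}}(j)$ with the elementary divisor exponents of the dual Selmer group, so telescoping gives
$$\mathrm{length}_{\mathbb{Z}_p} \mathrm{Sel}_{\ell\textrm{-}\mathrm{str}}(\mathbb{Q}, \mathrm{ad}^0(E[p^k])) = \varphi_{\stark^{\ell}}(0) - \partial\varphi_{\stark^{\ell}}(\infty) = \mathrm{ord}(\epsilon^{\ell}_1) - \partial\varphi_{\stark^{\ell}}(\infty),$$
which is (1). For (3), $\stark^{\ell}$ is primitive exactly when its image in $\mathbf{SS}_1(\mathrm{Sym}^2(E[p]))$ is non-zero, and by \cite[Prop.~7.3 and Thm.~7.4]{mazur-rubin-control} this is equivalent to $\partial\varphi_{\stark^{\ell}}(\infty) = 0$. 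Finally (2) is the combination of (1) and (3).

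The routine verifications aside, the only point that really needs care—and which I expect to be a subtlety rather than a genuine obstacle—is the dictionary between the invariant $\partial\varphi_{\stark^{\ell}}(\infty)$ in the normalization of our statement and the invariant of \cite{mazur-rubin-control} whose vanishing detects primitivity, together with the compatibility of the dual Selmer group identification above with the transition maps. Everything else is unobstructed because we have arranged, via $k \gg 0$ and Flach's finiteness Theorem~\ref{thm:flach-annihilation}, that every Selmer group in sight is finite and the mod $p^k$ picture is faithful.
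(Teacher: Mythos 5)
Your proof is correct and follows the same route as the paper, which simply cites \cite[Thm.\ 8.7]{mazur-rubin-control} (with \cite[Prop.\ 7.3, Thm.\ 7.4]{mazur-rubin-control} supplying the primitivity characterization in part (3)). The additional bookkeeping you spell out --- identifying the dual Selmer group $\mathrm{Sel}_{(\mathcal{F}^{\ell}_{\mathrm{BK}})^*}(\mathbb{Q},\mathrm{Sym}^2(E[p^k])^*)$ with $\mathrm{Sel}_{\ell\textrm{-}\mathrm{str}}(\mathbb{Q},\mathrm{ad}^0(E[p^k]))$ and telescoping the sequence $\varphi_{\stark^{\ell}}$ --- is precisely what is packaged inside that cited theorem, so there is no material difference in approach.
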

\begin{proof}
See \cite[Thm. 8.7]{mazur-rubin-control}. In fact, the module structure of $\mathrm{Sel}_{\ell\textrm{-}\mathrm{str}}(\mathbb{Q}, \mathrm{ad}^0(E[p^k]))$ is also determined by $\stark^{\ell}$.
\end{proof}
\subsection{The key computation} \label{subsec:key-computation}
We keep the choices of $k$, $\ell$, and $n$ in \S\ref{subsec:choose_n} from now on.
We explicitly compute 
$$\Psi_{n,n/q_s} \left( c(\ell) \wedge \bigwedge^s_{i=1} c(q_i) \otimes \bigwedge^s_{i=1} \psi_{q_i} \right)$$
where $c(\ell)$ and $c(q_i)$'s are Flach's zeta elements in Theorem \ref{thm:flach-zeta-elements} and
$\psi_{q_i} : \mathrm{H}^1_{/f} (\mathbb{Q}_{q_i} , \mathrm{Sym}^2(E[p^k])) \simeq \mathbb{Z}/p^k\mathbb{Z}$ is the map in Lemma \ref{lem:cartesian}.
Here we use the same notation $c(\ell)$ and $c(q_i)$ for their image  in $\mathrm{H}^1 ( \mathbb{Q}, \mathrm{Sym}^2(E[p^k]) )$.
Note that $c(\ell)$ and all $c(q_i)$'s are linearly independent due to their local conditions in Theorem \ref{thm:flach-zeta-elements}.
We first recall a proposition of Mazur--Rubin and apply it to our setting.
\begin{prop}[Mazur--Rubin] \label{prop:mazur-rubin-unique-map}
Let $R$ be a local principal ideal ring with maximal ideal $\mathfrak{m}$.
Suppose that 
$$0 \to N \to M \xrightarrow{\psi} C$$ is an exact sequence of finitely generated $R$-modules, with $C$ free of rank one, and $r \geq 1$.
Then there exists a unique map
$$\widehat{\psi} : \wedge^{r} M  \to C \otimes \wedge^{r-1}N$$
such that
\begin{enumerate}
\item the composition 
$$\wedge^r M \xrightarrow{\widehat{\psi}} C \otimes \wedge^{r-1}N \to C \otimes \wedge^{r-1} M$$
is given by 
$$m_1 \wedge \cdots \wedge m_r \mapsto \sum^{r}_{i=1} (-1)^{i+1} \cdot \psi(m_i) \otimes (m_1 \wedge \cdots \wedge m_{i-1} \wedge m_{i+1} \wedge \cdots \wedge m_r) ,$$
\item the image of $\widehat{\psi}$ is the image of $\psi(M) \otimes \wedge^{r-1}N \to C \otimes \wedge^{r-1}N$.
\end{enumerate}
If $M$ is free of rank $r$ over $R$, then $\widehat{\psi}$ is an isomorphism if and only if $\psi$ is surjective.
\end{prop}
\begin{proof}
See \cite[Prop. A.1]{mazur-rubin-control}.
\end{proof}
Consider exact sequence
\[
\xymatrix{
 \mathrm{Sel}_{\mathcal{F}^{\ell n/q_s}_{\mathrm{BK}}}(\mathbb{Q}, \mathrm{Sym}^2(E[p^k])) \ar@{^{(}->}[r]  & \mathrm{Sel}_{\mathcal{F}^{\ell n}_{\mathrm{BK}}}(\mathbb{Q}, \mathrm{Sym}^2(E[p^k])) \ar[rr]^-{ \psi_{q_s} \circ \mathrm{loc}^s_{q_s}} & & \mathbb{Z}/p^k\mathbb{Z} .
}
\]
By applying Proposition \ref{prop:mazur-rubin-unique-map} to the above sequence, we obtain a unique map 
$$\widehat{\psi_{q_s} \circ \mathrm{loc}^s_{q_s}} : \bigwedge^{s+1} \mathrm{Sel}_{\mathcal{F}^{\ell n}_{\mathrm{BK}}}(\mathbb{Q}, \mathrm{Sym}^2(E[p^k])) \to \mathbb{Z}/p^k\mathbb{Z} \otimes \bigwedge^{s} \mathrm{Sel}_{\mathcal{F}^{\ell n /q_s }_{\mathrm{BK}}}(\mathbb{Q}, \mathrm{Sym}^2(E[p^k]))$$
such that the composition
\[
\xymatrix{
\bigwedge^{s+1} \mathrm{Sel}_{\mathcal{F}^{\ell n}_{\mathrm{BK}}}(\mathbb{Q}, \mathrm{Sym}^2(E[p^k])) \ar[rr]^-{ \widehat{\psi_{q_s} \circ \mathrm{loc}^s_{q_s}} } & & \mathbb{Z}/p^k\mathbb{Z} \otimes \bigwedge^{s} \mathrm{Sel}_{\mathcal{F}^{\ell n / q_s}_{\mathrm{BK}}}(\mathbb{Q}, \mathrm{Sym}^2(E[p^k])) \ar[d] \\
& & \mathbb{Z}/p^k\mathbb{Z} \otimes \bigwedge^{s} \mathrm{Sel}_{\mathcal{F}^{\ell n}_{\mathrm{BK}}}(\mathbb{Q}, \mathrm{Sym}^2(E[p^k]))
}
\]
is given by
$$c(\ell) \wedge \bigwedge_{i=1}^{s} c(q_i) \mapsto 
\psi_{q_s} \circ \mathrm{loc}^s_{q_s} (c(\ell)) \cdot \bigwedge_{i=1}^{s} c(q_i) +
\sum_{i=1}^{s} \left( (-1)^{i} \cdot \psi_{q_s} \circ \mathrm{loc}^s_{q_s} (c(q_i)) \cdot c(\ell) \wedge \bigwedge^s_{ \substack{ j=1 \\ j \neq i}} c(q_j) \right) ,$$
and the image of $\widehat{\psi_{q_s} \circ \mathrm{loc}^s_{q_s}}$ is the image of
$\psi_{q_s} \circ \mathrm{loc}^s_{q_s} \left( \mathrm{Sel}_{\mathcal{F}^{\ell n}_{\mathrm{BK}}}(\mathbb{Q}, \mathrm{Sym}^2(E[p^k])) \right) \otimes \bigwedge^{s} \mathrm{Sel}_{\mathcal{F}^{\ell n / q_s}_{\mathrm{BK}}}(\mathbb{Q}, \mathrm{Sym}^2(E[p^k]))$ in $\mathbb{Z}/p^k\mathbb{Z} \otimes \bigwedge^{s} \mathrm{Sel}_{\mathcal{F}^{\ell n / q_s}_{\mathrm{BK}}}(\mathbb{Q}, \mathrm{Sym}^2(E[p^k])) $.

By the local conditions of Flach's zeta elements in Theorem \ref{thm:flach-zeta-elements},
we have
\begin{align*}
&  \psi_{q_s} \circ \mathrm{loc}^s_{q_s} (c(\ell)) \cdot \bigwedge_{i=1}^{s} c(q_i) +
\sum_{i=1}^{s} \left( (-1)^{i} \cdot \psi_{q_s} \circ\mathrm{loc}^s_{q_s} (c(q_i)) \cdot c(\ell) \wedge \bigwedge^s_{ \substack{ j=1 \\ j \neq i}} c(q_j) \right) \\
& = (- 1)^s \cdot\psi_{q_s} \circ \mathrm{loc}^s_{q_s} (c(q_s)) \cdot c(\ell) \wedge \bigwedge^{s-1}_{i=1} c(q_i) .
\end{align*}
Following the concrete description of $\Psi_{n,m}$ in \cite[pp. 161]{mazur-rubin-control}, we have
\begin{equation} \label{eqn:key-computation}
\Psi_{n,n/q_s} \left( c(\ell) \wedge \bigwedge^s_{i=1} c(q_i) \otimes \bigwedge^s_{i=1} \psi_{q_i} \right) =  (- 1)^s \cdot \psi_{q_s} \circ \mathrm{loc}^s_{q_s} (c(q_s)) \cdot c(\ell) \wedge \bigwedge^{s-1}_{i=1} c(q_i) \otimes \bigwedge^{s-1}_{i=1} \psi_{q_i} .
\end{equation}
The computation of the image under $\Psi_{n,n/q_i}$ is identical for every $i$ possibly except the sign.
By Proposition \ref{prop:compatibility}, we also have
\begin{equation} \label{eqn:composition-stark-system-map}
\Psi_{n,1} = \Psi_{q_1, 1} \circ \Psi_{q_1\cdot q_2 , q_1} \circ  \cdots \circ \Psi_{n/q_s, n/(q_{s-1}\cdot q_{s})} \circ \Psi_{n, n/q_s} .
\end{equation}

\subsection{A Stark system of Gauss sum type and its extension} 
We keep the choices of $k$, $\ell$, and $n$ in \S\ref{subsec:choose_n} as before.
Define
$$ \epsilon^{\mathrm{Flach}, \ell}_n := (- 1)^{\frac{s(s+1)}{2}} \cdot  c(\ell) \wedge \bigwedge^s_{i=1} c(q_i) \otimes \bigwedge^s_{i=1} \psi_{q_i}  \in Y_n $$
so that
\begin{align*}
\Psi_{n,1} ( \epsilon^{\mathrm{Flach}, \ell}_n ) & = \left( \prod_{i=1}^s \psi_{q_i} \circ \mathrm{loc}^s_{q_i}(c(q_i)) \right) \cdot c(\ell) \\
& =  \mathrm{deg}(\phi)^s  \cdot c(\ell)
\end{align*}
This formula follows from the iteration of (\ref{eqn:key-computation}) via (\ref{eqn:composition-stark-system-map}) and Proposition \ref{prop:explicit-reciprocity-law}.
\begin{rem} \label{rem:non-vanishing}
Due to our choice of $k$, $\ell$, and $n$ following \S\ref{subsec:choose_n},
$\mathrm{deg}(\phi)^s  \cdot c(\ell)$ does not vanish in $\mathrm{Sel}_{\mathcal{F}^{\ell}_{\mathrm{BK}}} ( \mathbb{Q}, \mathrm{Sym}^2(E[p^k]) )$.
\end{rem}
For $m$ dividing $n$, we define
$$\epsilon^{\mathrm{Flach}, \ell}_m := \Psi_{n,m}(\epsilon^{\mathrm{Flach}, \ell}_n) .$$
By Definition \ref{defn:stark-systems},
$ \left\lbrace \epsilon^{\mathrm{Flach}, \ell}_m \in Y_m : m \vert n \right\rbrace$
forms a \emph{finite} Stark system of core rank one, i.e. a Stark system for 
Selmer data $(\mathrm{Sym}^2(E[p^k]), \mathcal{F}^{\ell}_{\mathrm{BK}}, \mathcal{P}^{n})$
where $\mathcal{P}^{n} $ is the (finite!) set of the primes dividing $n$.

Let $m$ be a divisor of $n$ and $G_m = \otimes_{q \vert m} \mathrm{Gal}(\mathbb{Q}(\zeta^{(p)}_{q})/\mathbb{Q})$ where
$\mathbb{Q}(\zeta^{(p)}_{q})$ is the maximal $p$-subextension of $\mathbb{Q}$ in $\mathbb{Q}(\zeta_{q})$.
Recall the map in \cite[$\S$12]{mazur-rubin-control}
 \begin{equation} \label{eqn:stark-to-kolyvagin}
 \Pi_m : Y_m \to \mathrm{Sel}_{\mathcal{F}^{\ell}_{\mathrm{BK}}(m)} (\mathbb{Q}, \mathrm{Sym}^2(E[p^k]) ) \otimes G_m
 \end{equation}
where
$\mathcal{F}^{\ell}_{\mathrm{BK}}(m)$ is the $\ell$-relaxed and $m$-transverse Bloch--Kato Selmer structure.
Then \cite[Prop. 12.3]{mazur-rubin-control} implies that
$$ \left\lbrace (-1)^{\nu(m)} \cdot \Pi_m ( \epsilon^{\mathrm{Flach}, \ell}_m)  : m \vert n \right\rbrace$$
forms a \emph{finite} Kolyvagin system for $\left( \mathrm{Sym}^2(E[p^k]), \mathcal{F}^{\ell}_{\mathrm{BK}}, \mathcal{P}^n \right)$.
Thanks to the core rank one property recalled in $\S$\ref{subsubsec:verifying_h_6}, the finite Kolyvagin system extends to the rank one Kolyvagin system $\ks^{\mathrm{Flach}, \ell}$ for $\left( \mathrm{Sym}^2(E[p^k]), \mathcal{F}^{\ell}_{\mathrm{BK}}, \mathcal{P}^{\mathrm{Flach},(\ell)}_k \right)$ as in the case of Gauss sum Kolyvagin systems \cite[Rem. 6.3.3]{mazur-rubin-book}.
We call $\ks^{\mathrm{Flach}, \ell}$ the \textbf{Flach--Kolyvagin system}.
\begin{rem}
\begin{enumerate}
\item The $\mathrm{deg}(\phi)^{\nu(n)} \ll k$ condition  in \S\ref{subsec:choose_n} is removed in the extended Kolyvagin system $\ks^{\mathrm{Flach}, \ell}$.
\item If $n \in \mathcal{P}^{\mathrm{Flach},(\ell)}_k$ with $\mathrm{Sel}_{\mathcal{F}_{\mathrm{BK},\ell n}} (\mathbb{Q}, \mathrm{ad}^0(E[p^k]) ) =0$, then
$\kappa^{\mathrm{Flach},\ell}_n$ is uniquely determined by $\kappa^{\mathrm{Flach}, \ell}_m$ for $m$ properly dividing $n$ \cite[Rem. 3.1.9]{mazur-rubin-book} where $\mathcal{F}_{\mathrm{BK},\ell n}$ is the $\ell n$-strict Bloch--Kato Selmer structure.
From this point of view, the extension of a finite Kolyvagin system is not a miracle and it shows the strength of the rigidity of Kolyvagin systems. 
\end{enumerate}
\end{rem}
The modules of Stark and Kolyvagin systems over $\mathbb{Z}/p^k\mathbb{Z}$ are isomorphic as free $\mathbb{Z}/p^k\mathbb{Z}$-modules of rank one (Theorem \ref{thm:core-rank-one} and \cite[Rem. 11.5 and Thm. 12.4]{mazur-rubin-control}).
By using this isomorphism, the finite Stark system also extends to the rank one Stark system $\stark^{\mathrm{Flach}, \ell}$  for $\left( \mathrm{Sym}^2(E[p^k]), \mathcal{F}^{\ell}_{\mathrm{BK}}, \mathcal{P}^{\mathrm{Flach},(\ell)}_k \right)$. We call $\stark^{\mathrm{Flach}, \ell}$  the \textbf{Flach--Stark system}.

\subsection{The constant multiple} \label{subsec:optimal-normalization}
We compare $\stark^{\mathrm{Flach}, \ell}$ with the \emph{primitive} one and compute the constant multiple in Theorem \ref{thm:flach-stark}.

By global Poitou--Tate duality \cite[Thm. 1.7.3]{rubin-book}, we have exact sequence
\begin{equation} \label{eqn:global-duality-one-ell}
\xymatrix{
\mathrm{Sel}_{ \mathcal{F}_{\mathrm{BK}, \ell} }( \mathbb{Q}, \mathrm{ad}^0(E[p^k])) \ar@{^{(}->}[r] &
\mathrm{Sel}_{ \mathcal{F}_{\mathrm{BK}} }( \mathbb{Q}, \mathrm{ad}^0(E[p^k])) \ar@{->>}[r] &
\left(
\dfrac{ \mathrm{H}^1_{/f}(\mathbb{Q}_\ell , \mathrm{Sym}^2(E[p^k]) )  }{ \mathrm{loc}^s_\ell \left( \mathrm{Sel}_{ \mathcal{F}^{\ell}_{\mathrm{BK}} }( \mathbb{Q}, \mathrm{Sym}^2(E[p^k]))  \right) } \right)^\vee
}
\end{equation}
where $(-)^\vee$ is the Pontryagin dual.
We also have
\begin{align*}
\kappa^{\mathrm{Flach}, \ell}_1 & = \epsilon^{\mathrm{Flach}, \ell}_1 \\
& =  \left( \prod_{i=1}^s \psi_{q_i} \circ \mathrm{loc}^s_{q_i}(c(q_i)) \right) \cdot c(\ell) \\
& \in \mathrm{Sel}_{ \mathcal{F}^{\ell}_{\mathrm{BK}} }( \mathbb{Q}, \mathrm{Sym}^2(E[p^k])) \\
& \simeq \mathbb{Z}/p^k\mathbb{Z} \oplus \mathrm{Sel}_{ \mathcal{F}_{\mathrm{BK},\ell} }( \mathbb{Q}, \mathrm{ad}^0(E[p^k])) 
\end{align*}
where the last isomorphism is non-canonical and follows from \cite[Cor. 3.5.(i)]{mazur-rubin-control}. By \cite[Thm. 4.4.1]{mazur-rubin-book}, we know
$\langle \kappa^{\mathrm{Flach}, \ell}_1 \rangle \subseteq \mathbb{Z}/p^k\mathbb{Z} $
in the above decomposition.
Thus, we have equality
\begin{equation} \label{eqn:global-duality-one-ell-elements}
\mathrm{ord}_p(c(\ell)) + \mathrm{length}_{\mathbb{Z}_p} \left(
\dfrac{ \mathrm{H}^1_{/f}(\mathbb{Q}_\ell , \mathrm{Sym}^2(E[p^k]) )  }{ \mathrm{loc}^s_{\ell} \left( \mathrm{Sel}_{ \mathcal{F}^{\ell}_{\mathrm{BK}} }( \mathbb{Q}, \mathrm{Sym}^2(E[p^k])) \right) } \right)^\vee
=
\mathrm{ord}_p(\mathrm{loc}^s_{\ell}(c(\ell)) )
\end{equation}
where $\mathrm{ord}_p(a)$ means the $p$-divisibility index of $a$ in the natural module containing $a$.

Let $\ks^{\mathrm{prim}, \ell}$ be a primitive Kolyvagin system for $\left( \mathrm{Sym}^2(E[p^k]), \mathcal{F}^{\ell}_{\mathrm{BK}}, \mathcal{P}^{\mathrm{Flach},(\ell)}_k \right)$, which exists by the core rank one property \cite[Rem. 6.3.4]{mazur-rubin-book}.
We compare it with $\ks^{\mathrm{Flach}, \ell}$, equivalently with $\stark^{\mathrm{Flach}, \ell}$.
Then, by the primitivity of Kolyvagin systems of core rank one \cite[Cor. 5.2.13]{mazur-rubin-book},
we have
\begin{equation} \label{eqn:exact-bound-ell-strict}
\mathrm{length}_{\mathbb{Z}_p}\mathrm{Sel}_{ \mathcal{F}_{\mathrm{BK}, \ell} }( \mathbb{Q}, \mathrm{ad}^0(E[p^k]))
 = \mathrm{ord}_p \left(  \kappa^{\mathrm{prim}, \ell}_1 \right) .
\end{equation}
Now we have
\begin{align*}
\mathrm{length}_{\mathbb{Z}_p}  \mathrm{Sel}_{ \mathcal{F}_{\mathrm{BK}} }( \mathbb{Q}, \mathrm{ad}^0(E[p^k])) & = 
\mathrm{ord}_p \left( \mathrm{loc}^s_{\ell} \left(  \kappa^{\mathrm{prim}, \ell}_1 \right) \right) \\
& = \mathrm{ord}_p \left( \dfrac{ \psi_{\ell} \circ \mathrm{loc}^s_{\ell} \left(  \kappa^{\mathrm{prim}, \ell}_1 \right) }{\psi_{\ell} \circ \mathrm{loc}^s_\ell(c(\ell))} \cdot \mathrm{loc}^s_\ell(c(\ell)) \right) \\
& = 
\mathrm{ord}_p \left( \dfrac{ \psi_{\ell} \circ \mathrm{loc}^s_{\ell} \left(  \kappa^{\mathrm{prim}, \ell}_1 \right) }{\psi_{\ell} \circ \mathrm{loc}^s_\ell(c(\ell))}  \right) \cdot 
\mathrm{ord}_p \left(  \mathrm{loc}^s_\ell(c(\ell)) \right) \\
& = 
\mathrm{ord}_p \left( \dfrac{ \psi_{\ell} \circ \mathrm{loc}^s_{\ell} \left(  \kappa^{\mathrm{prim}, \ell}_1 \right) }{\psi_{\ell} \circ \mathrm{loc}^s_\ell(c(\ell))}  \right) \cdot 
\mathrm{ord}_p \left(  \mathrm{deg}(\phi) \right)
\end{align*}
where the first equality follows from the above equality with the global duality argument above, the last equality follows from Proposition \ref{prop:explicit-reciprocity-law}, and
$\psi_\ell : \mathrm{H}^1_{/f} (\mathbb{Q}_\ell, \mathrm{Sym}^2(E[p^k])) \simeq \mathbb{Z}/p^k\mathbb{Z}$ is the map in Lemma \ref{lem:cartesian}.
In particular, the constant multiple in Theorem \ref{thm:flach-stark}
 is 
 $$ \dfrac{ \psi_{\ell} \circ \mathrm{loc}^s_{\ell} \left(  \kappa^{\mathrm{prim}, \ell}_1 \right) }{\psi_{\ell} \circ \mathrm{loc}^s_\ell(c(\ell))} = \dfrac{\psi_{\ell} \circ \mathrm{loc}^s_{\ell} \left(  \kappa^{\mathrm{prim}, \ell}_1 \right) }{ \mathrm{deg}(\phi) } 
= \dfrac{ p^{\mathrm{length}_{\mathbb{Z}_p}  \mathrm{Sel}_{ \mathcal{F}_{\mathrm{BK}} }( \mathbb{Q}, \mathrm{ad}^0(E[p^k]))} }{   \mathrm{deg}(\phi)  } 
  $$
(up to a $p$-adic unit) by the above computation.
Therefore,  the Bloch--Kato conjecture (Corollary \ref{cor:bloch-kato}.(2)) is equivalent to showing that this constant multiple is a $p$-adic unit.
\begin{rem}
\begin{enumerate}
\item 
The primitive normalization $\stark^{\mathrm{Flach}', \ell}$ of $\stark^{\mathrm{Flach}, \ell}$
is defined
by 
$$\stark^{\mathrm{Flach}', \ell} = \dfrac{ p^{\mathrm{length}_{\mathbb{Z}_p}  \mathrm{Sel}_{ \mathcal{F}_{\mathrm{BK}} }( \mathbb{Q}, \mathrm{ad}^0(E[p^k]))} }{   \mathrm{deg}(\phi)  } \cdot \stark^{\mathrm{Flach}, \ell} .$$
\item 
The primitive normalization $\ks^{\mathrm{Flach}', \ell}$ of $\ks^{\mathrm{Flach}, \ell}$
is also defined
by 
$$\ks^{\mathrm{Flach}', \ell} = \dfrac{ p^{\mathrm{length}_{\mathbb{Z}_p}  \mathrm{Sel}_{ \mathcal{F}_{\mathrm{BK}} }( \mathbb{Q}, \mathrm{ad}^0(E[p^k]))} }{   \mathrm{deg}(\phi)  } \cdot \ks^{\mathrm{Flach}, \ell} $$
and it is equal to $\ks^{\mathrm{prim}, \ell}$ up to a $p$-adic unit.
\end{enumerate}
\end{rem}

\section{Proofs of corollaries} \label{sec:corollaries}
\subsection{Proof of Corollary \ref{cor:exact-selmer-size}} \label{subsec:cor-exact-selmer-size}
This follows from the tautological bound in \S\ref{subsec:optimal-normalization} and the constant multiple $ \dfrac{ \psi_{\ell} \circ \mathrm{loc}^s_{\ell} \left(  \kappa^{\mathrm{prim}, \ell}_1 \right) }{\psi_{\ell} \circ \mathrm{loc}^s_\ell(c(\ell))} = \dfrac{ \psi_{\ell} \circ \mathrm{loc}^s_{\ell} \left(  \kappa^{\mathrm{prim}, \ell}_1 \right) }{ \mathrm{deg}(\phi) }  $.
\subsection{Proof of Corollary \ref{cor:bloch-kato}} \label{subsec:cor-bloch-kato}
The equivalence between (1) and (2) follows from the last argument in $\S$\ref{subsec:optimal-normalization}.
The equivalence between the Bloch--Kato conjecture (2) and the modularity lifting theorem (3) follows from Wiles' numerical criterion \cite[Thm. 5.3]{ddt}. See also \cite{diamond-flach-guo}.


\subsection{Proof of Corollary \ref{cor:cyclicity}} \label{subsec:cor-cyclicity}
The assumption says that
$$ \mathrm{ord}_p \left( \dfrac{ \psi_{\ell} \circ \mathrm{loc}^s_{\ell} \left(  \kappa^{\mathrm{prim}, \ell}_1 \right) }{\psi_{\ell} \circ \mathrm{loc}^s_\ell(c(\ell))} \cdot c(\ell)  \right) =\mathrm{ord}_p \left(  c(\ell)  \right) =0.$$
Thus, (\ref{eqn:exact-bound-ell-strict}) implies
$\mathrm{length}_{\mathbb{Z}_p}\mathrm{Sel}_{ \mathcal{F}_{\mathrm{BK}, \ell} }( \mathbb{Q}, \mathrm{ad}^0(E[p^k]))
=0$.
By global duality (\ref{eqn:global-duality-one-ell}), we have
$$\mathrm{Sel}_{ \mathcal{F}_{\mathrm{BK}} }( \mathbb{Q}, \mathrm{ad}^0(E[p^k])) \simeq
\left(
\dfrac{ \mathrm{H}^1_{/f}(\mathbb{Q}_\ell , \mathrm{Sym}^2(E[p^k]) )  }{ \mathrm{loc}^s_\ell \left( \mathrm{Sel}_{ \mathcal{F}^{\ell}_{\mathrm{BK}} }( \mathbb{Q}, \mathrm{Sym}^2(E[p^k]))  \right) } \right)^\vee$$
and
$\mathrm{H}^1_{/f}(\mathbb{Q}_\ell , \mathrm{Sym}^2(E[p^k]) ) \simeq \mathbb{Z}/p^k\mathbb{Z}$ by Lemma \ref{lem:cartesian}, so the proof is complete.

\section*{Acknowledgement}
We thank Francesc Castella, Henri Darmon, Matthias Flach, Minhyong Kim, Masato Kurihara, Tom Weston, and Christopher Skinner for their interests in our work.
We are benefited from the discussion with Ashay Burungale, Haruzo Hida, Antonio Lei, Gyujin Oh, and Ryotaro Sakamoto.
We would like to thank Marco Sangiovanni Vincentelli for kindly sharing \cite{sangiovanni-skinner} with us and for his interest on the cyclicity result.
We sincerely thank the referees for the thorough reading and helpful comments, which have greatly improved the clarity of our exposition and have removed errors in an earlier version.
In particular, the computation of the constant multiple is significantly simplified by the suggestion of a referee.

\bibliographystyle{amsalpha}
\bibliography{library}
\end{document}